\newtheorem{theorem}{Theorem}[section]
\newtheorem{algorithm}[theorem]{Algorithm}
\newtheorem{definition}[theorem]{Definition}
\newtheorem{lemma}[theorem]{Lemma}
\newtheorem{proposition}[theorem]{Proposition}
\newtheorem{remark}[theorem]{Remark}
\newcommand{\sL}{\mathrm{L}}
\newcommand{\RR}{\mathbb{R}}
\newcommand{\Pc}{\mathcal{P}}
\newcommand{\Vu}{ \boldsymbol{u}}
\newcommand{\Ve}{ \boldsymbol{e}}
\newcommand{\VF}{ \boldsymbol{F}}
\newcommand{\VPhi}{ \boldsymbol{\Phi}}
\newcommand{\Vphi}{ \boldsymbol{\phi}}
\begin{document}

\title{Reconstruction of finite volume solution for parameter-dependent linear hyperbolic conservation laws}

\author{M. Billaud-Friess and T. Heuz\'e}
\date{\today}
\maketitle

\begin{abstract}

This paper is concerned with the development of suitable  numerical method for the approximation of discontinuous solutions of parameter-dependent linear hyperbolic conservation laws.  The objective is to reconstruct such approximation, for new instances of the parameter values and given time, from a transformation of pre-computed snapshots of the solution for new  parameter values. 
In a finite volume setting,  a Reconstruct-Translate-Average (RTA) algorithm inspired from the Reconstruct-Evolve-Average one of Godunov's method is proposed. It allows to perform, in three steps, a transformation of the snapshots with piecewise constant reconstruction. The method is fully detailed and analyzed for solving a parameter-dependent transport equation for which the spatial transformation is related to the characteristic intrinsic to the problem.  Numerical results for transport equation and linear  elastodynamics equations illustrate the good behavior of the proposed approach.\\

\end{abstract}

{\small \noindent\textbf{Keywords: Parameter-dependent, Linear hyperbolic conservation laws, Reconstruct-Translate-Average, Finite Volume} 



\section{Introduction}\label{introduction}

\subsection{General context }\label{sec:soa}

Reduced Order Models (ROM) have emerged as a powerful tool to reduce the computational cost for solving complex numerical models whose solution belongs to a high-dimensional space. The main idea of these methods is to compute a surrogate model, whose solution is an approximation of the true solution, cheaper to compute. In practice, it allows to deal efficiently with real time problems requiring many evaluations of the solution as encountered in uncertainty quantification or parametric studies for example.  Many efforts have been devoted to the development of these methods for  elliptic and parabolic (parameter-dependent) Partial Differential Equations (PDEs). These problems usually possess smooth solutions which admit good approximations in low rank format. Projection based ROM methods are aimed at computing such a low rank approximation of the full solution in a low dimensional subspace  (see e.g. \cite{Nouy2015,Haasdonk2017}), called a reduced space, that well approximates the solution manifold of the full original problem.  This reduced space is generally spanned by a set of basis of vectors, computed from properly chosen snapshots of the full solution. 
For hyperbolic problems such methods provides poor approximations, in particular if the solution contains some discontinuities (see e.g. \cite{Haasdonk2009}).\\
Indeed, it is now well understood that such linear approximation methods are not suitable for solving hyperbolic problems because the solution manifold can not be well approximated with a linear space of small dimension $r$. Indeed, for such problems, the Kolmogorov $r$-width decreases slowly with $r$ (see e.g. \cite{Taddei2015,Welper2017,Greif2019}). To overcome the slow decay of  the  Kolmogorov $r$-width of the solution manifold, new ROM approaches have emerged relying on non linear approximation.
A first attempt for dealing with transport dominated problem has been considered in \cite{Ohlberger2013} using a freezing method. More recently, the authors of \cite{Rim2019} have proposed a projection based ROM method including non linear transformation of the solution manifold which produced adapted (local) reduced basis with respect to time and parameter. To do so, they introduce the concept of Manifold Approximation via Transported Subspaces (MATS), to obtain a time and parameter dependent reduced space from properly chosen transformations that belong to a {\it low rank space}  of dimension $s$. In that way, the authors generalize the notion of Kolmogorov $r$-width to non linear Kolmogorov $(r,s)$-width to the solution manifold which decreases faster with $r,s$. A similar idea has been considered for ROM of non linear transport-dominated problems in \cite{Peherstorfer2018}, to revisit the Empirical Interpolation Method (EIM) with adaptive basis and sampling. Other various approaches using transformed reduced basis have been proposed. Among them, let us mention the calibrated manifold based ROM \cite{Cagniart2016,Cagniart2017}, transformed snapshot interpolation method \cite{Welper2017}, the Shifted-POD (sPOD) \cite{Reiss2018,Black2020Nov} with application to multiple transport problems \cite{Black2019}, transport reversal for template fitting \cite{Rim2019}, Transported Snapshot ROM \cite{Sarna2020Mar}, the adaptive space-time registration-based data compression procedure \cite{Taddei2021Jan}. Beyond transformed based approaches, the resolution of parameter-dependent hyperbolic problems have motivated  the emergence of various ROM methods in the last years.  Let mention, for example, extension of ROM to $\sL^2$-Wasserstein spaces \cite{Ehrlacher2019} to solve one-dimensional conservation laws. In \cite{Bansal2021},  a ROM method relying on the decomposition of the solution into a function that tracks the evolving discontinuity and a residual part, combined with Proper Orthogonal Decomposition is proposed. Dynamical low-rank approximation methods seem also be relevant to deal with such problems, see \cite{Billaud2017} for parameter-dependent transport dominated problems and more recently \cite{Kusch2021May} for a Burger's equation with uncertainties. Finally, new path is opening up to methods gathering Neural Networks and MOR (see e.g. \cite{Rim2020Jul,Peng2021May}).\\
Morever, defining suitable projection based ROM methods providing approximations that preserve as much as possible the mathematical features of solutions of hyperbolic PDEs remain an opened question. Especially, the design of methods which are conservative, entropic, monotonic or Total Variation Diminishing (TVD). In this direction, it was proposed in \cite{Abgrall2016,Abgrall2018}  an approximation problem based on the minimization of the residual of the discretized equations in $\sL^1$-norm for ROM of hyperbolic conservation laws.  It has the advantage to provide non oscillatory approximation, especially in presence of shocks. Projection-based hyper-reduced models of nonlinear conservation laws  globally conservative that inherits a semi-discrete entropy inequality has also been proposed in \cite{Chan2020Dec}. 

\subsection{Main contribution and outline of the paper}


In this paper, we consider a one dimensional parameter-dependent scalar conservation equation. The space domain is the bounded open interval $\Omega \subset \RR$, and $I=[0,T]$ is the time domain. We are interested by the function $u(\cdot, t; \mu)$, belonging to the space $V$, solution of the equation
\begin{equation}\label{eq:syscons} 
\partial_tu(x,t;\mu) +  \partial_x f(u(x,t;\mu);\mu) =0, \quad (x,t) \in \Omega \times I,\\
\end{equation} 
with initial condition $u^0 : \overline \Omega \to \RR$ and suitable boundary conditions. 
The considered initial boundary value problem \eqref{eq:syscons} depends on parameters $\mu  \in \Pc \subset \RR^p$ through the conservative flux $f(\cdot,\mu)$.\\

In the lines of \cite{Peherstorfer2018,Rim2019,Welper2017},  one application of this work, is to  design a dynamical RB method \cite{Billaud2017} with adapted local basis for  solving Equation \eqref{eq:syscons}.
The idea of such an approach is to design reduced spaces spanned by local basis functions $\{ \phi_i(x,t; \mu) \}_{i=1}^r$ deduced from $\{u(\varphi_i(x,t;\mu),t), t; \mu_i)\}_{i=1}^r$, which correspond to transformed snapshots of the solution for given parameter instances with $\varphi(\cdot,t;\mu, \mu_i) : \Omega \to \Omega $ a parameter dependent space transformation at the instant $t$. Then, the  solution $u(x,t;\mu)$ of Equation\eqref{eq:syscons} is approximated by the following {\it rank-$r$} approximation
\begin{equation}\label{eq:approxt}
u(x,t;\mu) \approx \sum_{i=1}^r \alpha_i(t;\mu) \phi_i(x,t; \mu).
\end{equation}
From a physical viewpoint, a relevant choice is to derive the parameter dependent transformations $\varphi(\mu, \mu_i) $ from the characteristic associated with the hyperbolic system \eqref{eq:syscons}, such that the approximation \eqref{eq:approxt} captures well the features of the original solution. In particular, let us mention that such approximation is exact with  $r=1$ for parameter dependent linear transport equation when the space transformation is a parameter and time dependent space shift \cite[Example 3.6]{Rim2019}. \\

This paper focuses on the design of a robust approximation for discontinuous solutions generated by parameter-dependent linear hyperbolic systems. Efficiently computing the transformation $\varphi(\mu, \mu_i) $ is then essential, given precomputed snapshots of the trajectory of the true solution for given instances of the parameter. In particular, this work derives such an approach in full Finite Volume (FV) framework where the snapshots consist of trajectories of the numerical approximation provided by a known FV scheme. To this end, the Reconstruct-Translate-Average (RTA) method is introduced, which is inspired from Godunov's method also interpreted as Reconstruct-Evolve-Average (REA) \cite{Leveque2002}. The proposed method is detailed and analyzed for one dimensional parameter dependent linear scalar transport equation, and an application to system of linear hyperbolic conservation laws is shown. In that case, one snapshot is sufficient to approximate the FV solution for any parameter values. In view of ROM, the ideal approximation is recovered with one single basis function up to discretization error at any time, as for the continuous case.  This leads to a simple and efficient method that does not require any projection step neither time stepping procedure. \\


The outline of the paper is as follows.  After recalling in Section \ref{sec:FV} basic notations and results associated with FV scheme especially for transport equations, we present in Section \ref{sec:econs}  the RTA method in general setting and discuss possible application in the context of ROM. In Section \ref{sec:transport}, the RTA method is fully detailed for practical application, and convergence study is also performed. We conclude the paper with some numerical result demonstrating the behavior of the proposed method for the parameter dependent transport problem and the wave equation.

\section{Finite volume scheme} \label{sec:FV}

In this section, basic notations of the finite volume approximation for parameter-dependent scalar conservation law are summarized in one space dimension. In particular, the first order upwind scheme is recalled for the transport equation, and its link with the so-called Reconstruct-Evolve-Average (REA) algorithm.

\subsection{General setting}

Let $\mathcal {T}_N  = \{\overline C_j\}_{j=1}^N$ be a uniform one-dimensional mesh of the spatial domain $\overline{\Omega}$  containing $N$ cells noted $C_j = (x_{j-1/2},x_{j+1/2})$ of size $\Delta x$ and node coordinates $x_{j\pm1/2} = (j\pm1/2)\Delta x$. The time interval is  discretized such that $0 = t^0 <  \dots < t^k < \dots <t^K=T$ with a fixed time-step $\Delta t$, and $t^k = k \Delta t$. In practice, $\Delta t$ is chosen so that to satisfy the Courant-Friedrich-Levy  (CFL) stability condition.\\


%

The conservation law \eqref{eq:syscons} written in integral form over the domain $C_j \times [t^k,t^{k+1}]$ yields the conservative time update
\begin{equation}
 \int_{C_j} u(x,t^{k+1};\mu) dx   =  \int_{C_j} u(x,t^k;\mu)  dx - \Delta t \left( F_{j+\frac{1}{2}}(\mu)- F_{j-\frac{1}{2}}(\mu)\right)
\label{eq:conservativeForm}
\end{equation} 
where $F_{j+\frac{1}{2}}(\mu)$ and $F_{j-\frac{1}{2}}(\mu)$ denotes the interface fluxes defined by
\begin{equation}
F_{j+\frac{1}{2}}(\mu) = \frac{1}{\Delta t} \int_{t^k}^{t^{k+1}} f(u(x_{j+\frac{1}{2}},t;\mu)) dt \:, \quad F_{j-\frac{1}{2}}(\mu) = \frac{1}{\Delta t} \int_{t^k}^{t^{k+1}} f(u(x_{j-\frac{1}{2}},t;\mu)) dt.
\end{equation}
The flux $F_{j - \frac{1}{2}}(\mu)$ (similarly for  $F_{j + \frac{1}{2}}(\mu)$),  at interface $j-1/2$ between cells $C_{j-1},C_{j}$, can then be approximated by numerical fluxes to derive a particular finite volume scheme. Especially, in the Godunov's method \cite{Godunov1959}, $F_{j-\frac{1}{2}}(\mu)$ is approximated by the Godunov's fluxes ${\cal F}(u^k_{j-1},u^k_j;\mu)$ where $u_j^k(\mu)$ stands for the approximated averaged valued of $u$ in the cell $C_j$ at time $t^k$.  It leads to a the first order upwind scheme that produces a sequence of approximations $\{u_N^k(\mu)\}_{k=0}^K$ in $V_N \subset V$ in the finite space $V_N$ of piecewise constant functions, with $\dim(V_N)=N$,  obtained with the following rule. Given the initial condition $u^0$, the  $\sL^2$-projection on $V_N$ of $u_N^0(\mu)$ given by
\begin{equation}
u_N^0(x; \mu) = \dfrac{1}{\Delta x} \int_{C_j} u^0, ~\text{for } x \in C_j,
\label{eq:projCI}
\end{equation}
then, $u_N^{k+1}(\mu) \in V_N$ is defined as
\begin{equation}
u_N^{k+1}(x; \mu) =  u_j ^{k+1}(\mu), ~\text{for }x \in C_j,
\end{equation}
where the cell values $u_j^{k+1}(\mu)$ are obtained through the time update
\begin{equation}
 \qquad u_j ^{k+1}(\mu) = u_j ^k(\mu) - \dfrac{\Delta t}{\Delta x} \left( {\cal F}(u_{j}^k,u_{j+1}^k;\mu)-  {\cal F} (u_{j-1}^k,u_{j}^k; \mu) \right).
\label{eq:conservativeForm}
\end{equation}

Equation \eqref{eq:conservativeForm} can also be written in vector form by identifying the sequence of approximation $\{u_N^k(\mu)\}_{k=0}^K$ in $V_N$ with the sequence of vectors $\{\Vu^k(\mu)\}_{k=0}^K$ in $\RR^N$ given by 
\begin{equation}
\Vu^{k+1}(\mu)=  \Vu^k(\mu) - \frac{\Delta t }{\Delta x}\left(\VF_+(\Vu^k;\mu)-  \VF_-(\Vu^k;\mu)\right),
\label{eq:FVformulation}
\end{equation}
with $\Vu^k(\mu) \in \RR^N$ having component $(\Vu^{k}(\mu))_j = u_j^k(\mu)$. The discrete flux vectors $\VF_\pm(\Vu^k;\mu) \in \RR^N$ are given by $(\VF_+(\Vu^k ;\mu))_j =  {\cal F}(u_{j}^k,u_{j+1}^k;\mu)$ and $(\VF_-(\Vu^k;\mu))_j =  {\cal F}(u_{j-1}^k,u_{j}^k;\mu)$. For the sake of readability, the dependence on $\mu$ in $u_j ^{k}$ is omitted when there is no ambiguity.

\subsection{Transport equation} \label{FVtransport}

A parameter dependent transport equation is considered with periodic boundary conditions in this section. The conservative flux in Equation \eqref{eq:syscons} is linear and simply reads $f(u,\mu) = a(\mu) u$, for a given uniformly bounded real valued function $a : \RR^p \to \RR$.
 In that case, the upwind numerical fluxes in \eqref{eq:FVformulation} are given by
\begin{eqnarray}
(\VF_{+}(\Vu^k ,\mu))_j &=& {\cal F}(u_{j+1}^k,u_j^k; \mu)=a_+(\mu) u_j^k +a_-(\mu)u_{j+1}^k,  \nonumber \\
(\VF_{-}(\Vu^k;\mu))_j &=& {\cal F}(u_{j}^k,u_{j-1}^k;\mu)=a_+(\mu) u_{j-1}^k+a_- (\mu)u_{j}^k, 
\label{eq:upwindfluxes}
\end{eqnarray}
where $a_+(\mu)=\max (a(\mu),0)$ and $a_-(\mu)=\min (a(\mu),0)$ are respectively the positive and negative parts of $a(\mu)$. A von Neumann analysis ensures that this scheme is stable under the CFL condition 
\begin{equation}
\label{eq:CFL}
|a(\mu)| \Delta t < \Delta x.
\end{equation}
The upwind scheme for the advection equation can be derived as a special case of the \textit{Reconstruct-Evolve-Average} (REA) algorithm  \cite[Section 4.10]{Leveque2002} originally proposed by Godunov \cite{Godunov1959} for Euler equations. It involves three steps summarized in Algorithm \ref{REA}.

\begin{algorithm}[REA algorithm] \label{REA}~\\
Given the initial condition $\Vu^0 \in \RR^N$, compute $\Vu^k$ for  $k\in \{1, \dots, K\}$ as follows.
\begin{enumerate}
\item\emph{Reconstruct}  the function $u_N : t \mapsto  u_N(t)$ from  $I$ to $V_N$ such that 
$$u_N(x,t^k) = u_j^k,  \quad  \text{ for } x \in C_j.$$
\item \emph{Evolve} the solution from $t^k$ to $t^{k+1}$, by computing the exact solution of the transport equation \eqref{eq:syscons} with the initial datum $u_N(t^k)$, to get 
$$
\tilde{u}_N(x,t^{k+1}) = u_N (x-a(\mu)\Delta t, t^k), \text{ for } x \in \Omega.
$$ 
\item \emph{Average}  the function $\tilde{u}_N(t^{k+1})$ over each grid cell $C_j$ to obtain
$$u_j^{k+1} = \frac{1}{\Delta x} \int_{C_j} \tilde{u}_N(x,t^{k+1})dx .$$
\end{enumerate}
\end{algorithm}

Under CFL condition \eqref{eq:CFL}, the REA algorithm, with piecewise constant reconstruction, is convergent \cite[\S 8]{Leveque2002}. Indeed, given $\mu \in \Pc$, for any time $t^k$ we have
\begin{equation}
 \lim_{\Delta x \to 0 } \|u_N(t^k; \mu) - u(t^k; \mu)\|_1 = 0 
\label{eq:upconv}
\end{equation}
where $\|\cdot\|_1$ stands for the $\sL^1$-norm in space. 
For smooth solutions, the error behaves as $\mathcal{O}(\Delta x)$, but only as $\mathcal{O}(\sqrt{\Delta x})$ for solutions including discontinuities \cite[\S 8]{Leveque2002}. The REA procedure applied with a piecewise constant reconstruction yields a monotone scheme \cite[Definition 13.35]{Toro2013}, preventing the appearence of new extrema in the numerical solution and hence of spurious numerical oscillations. Higher order approximation could also be recovered by using more general piecewise polynomial reconstruction at Step 2. (see e.g. \cite[\S 6]{Leveque2002} for details).\\


\subsection{Shifting operator}

The upwind scheme or equivalently the REA algorithm with piecewise constant reconstruction can be reformulated in term of a {\it shifting operator}. This operator originally derived in \cite[Section 3]{Rim2018Feb} is recalled in this section.\\

First we define the permutation matrix $L \in \RR^{n\times n}$, associated with periodic boundary conditions, by
$$L=
\left (
\begin{matrix}
0 & 0 & \cdots & 0 & 1 \\
1 & 0 & \cdots & 0 & 0 \\
0 & 1 & \cdots & 0 & 0 \\
\vdots & \vdots &  \ddots &\vdots & \vdots \\
0 & 0 & \cdots & 1 & 0 \\
\end{matrix}
\right ),
$$
which satisfies $LL^T =id_N$, $id_N$ being the $N$-order identity matrix. More generally, for two arbitrary integers $s,p$ the following property holds
$$
(L^s)^TL^p  = 
\left\{
\begin{array}{cl}
(L^{s-p})^T & \text{ if } s > p , \\
L^{p-s} & \text{ if } s < p , \\
id_N& \text{ if } p =s.
\end{array}
\right.
$$
Moreover, if $p$ is a negative integer, then  $L^p=  (L^{-p})^T$. 

%
%



\begin{definition}[Shifting operator] \label{def:gshift}
For any $\tilde \omega \in \RR $, we denote $N_{\tilde \omega} = \lfloor \omega \rfloor \in \mathbb{Z}$ its integer part and $\omega = \{ \tilde \omega\} \in [0,1)$ its fractional part. The shifting operator is the $N$-order matrix  defined as follows $${\cal K }(\tilde \omega) = L^{N_{\tilde \omega}}K(\omega). $$
with  $K(\omega) $ the $N$-order matrix  defined only for $\omega \in [0,1]$ by  $$K(\omega) = (1-\omega)I+\omega L.$$
For $\tilde \omega  \in [0,1]$, we have ${\cal K }(\tilde \omega)  = K(\omega)$.  
\end{definition}

Under the CFL condition \eqref{eq:CFL}, the upwind scheme given by Equations \eqref{eq:FVformulation}-\eqref{eq:upwindfluxes} can be recast under the following algebraic form 
\begin{equation}
 \qquad \Vu^{k+1}(\mu) = K(\nu) \Vu^{k}(\mu),
\label{eq:upwind_matrix_form}
\end{equation} 
with $\nu = a(\mu) \frac{\Delta t}{\Delta x}$.

\section{Reconstruction of FV-solution}\label{sec:econs}

This section focuses on the computation of some function $\phi_i(\cdot,t ; \mu)$  for a target value of the parameter $\mu \in \Pc$  from given precomputed snapshot $u(\cdot, t;\mu_i), \mu_i \neq \mu$.  To this end, let $\varphi(\mu,\mu_i)$ be a parameter dependent transformation from $\Omega \times I$ to $\Omega$ given by
\begin{equation}
\label{eq:spaceshift}
\phi(\cdot,t ; \mu,\mu_i) = u(\cdot, t,\mu_i) \circ \varphi(\mu,\mu_i)(\cdot,t), \qquad t \in I. 
\end{equation}
In Section \ref{sec:RTA}, we transpose this idea to a FV framework with the {\it Recontruct-Transform-Average} method. This approach is a first step toward  ROM for discretized hyperbolic conservation laws as discussed in Section \ref{sec:torom}.
 
\subsection{Recontruct-Transform-Average algorithm}\label{sec:RTA}

Given a precomputed FV approximation $\{\Vu^k(\mu_i)\}_{k=0}^K$, satisfying Equation \eqref{eq:conservativeForm} for one instance $\mu_i \in \Pc$ of the parameter, we reconstruct the sequence of vectors $\{\Vphi^k_i(\mu)\}_{k=0}^K \subset \RR^N$ from this snapshot by means of the transformation $\varphi(\mu,\mu_i)$.  To mimic 
 the continuous transformation  $\varphi(\mu,\mu_i)$ in the discrete setting, the Reconstruct-Transform-Average (RTA) method is introduced here. In the lines of the REA method. At given time $t^k$, it consists of three steps. First, the global approximation $u_N^k(\mu_i)$ in $V_N$ at time $t^k$ is deduced from $\Vu^k(\mu_i)$. Second, the resulting function is transformed with $\varphi(\mu, \mu_i)(\cdot, t^k)$. Third,  the components  $\Vphi^k_{i,j}(\mu)$  of the vector $ \Vphi^k_i(\mu)$ are obtained  by averaging  the transformed snapshot in each grid cell  $C_j$. The overall RTA procedure is summarized in Algorithm \ref{alg:RTA}.

\begin{algorithm}[RTA algorithm] \label{alg:RTA}~\\[0.2cm]
Let be given $\mu, \mu_i \in \Pc$,  $\Vu^k(\mu_i)$ the snapshot associated with $\mu_i$ at time $t^k$, and the space transformation $\varphi(\mu,\mu_i)$. Compute $\Vphi_i^k (\mu)$ from the snapshot $\Vu^k(\mu_i)$ as follows.
\begin{enumerate}
\item \emph{Reconstruct} $u^k_N(\mu_i)\in V_N$ such that 
\begin{equation}
u^k_N(\mu_i)(x) =  u_j^k(\mu_i), \text{ for } x \in C_j. 
 \label{eq:reconstruct}
\end{equation}
\item \emph{Transform} the snapshot $u^k_N(\mu_i)$
\begin{equation}
\tilde  \phi^k_N(\mu,\mu_i)(x) =u^k_N(\mu_i) \circ \varphi(\mu,\mu_i)(x,t^k), \text{ for }  x \in \overline{\Omega}.
 \label{eq:transform}
\end{equation}
\item \emph{Average} $\tilde \phi^k_N(\mu)$ over each grid cell $C_j$ to obtain the components $\phi^k_{i,j}(\mu)$ of $\Vphi^k_i(\mu) \in \RR^N$ given by 
\begin{equation}
 \phi^k_{i,j}(\mu) = \dfrac{1}{\Delta x} \int_{C_j} \tilde \phi^k_N(\mu,\mu_i)(x) dx, \quad j = 1, \dots, N.
 \label{eq:average}
 \end{equation}
\end{enumerate}
\end{algorithm}

\begin{remark}
For a transport problem, when $\varphi(\mu,\mu_i)$ is chosen to be a space shift and the snapshots are computed with the upwind scheme given by Equation \eqref{eq:upwind_matrix_form},  the RTA method is proven to be Total Variation Bounded (TVB), see Section \ref{sec:stab}. The main avantage of the RTA algorithm in this case is that the approximation $\Vphi_i^k(\mu)$ reconstructed from the snapshot $\Vu^k_N(\mu)$  will not exhibit any spurious numerical oscillations.
\end{remark}

\begin{remark}
The extension of the RTA procedure to approximations of higher order is straighforward on cartesian grids. Moreover, it is not limited to any particular boundary condition or transformation form. Such extensions are the object of future works. 
\end{remark}

\subsection{Toward ROM} \label{sec:torom} 

As motivated in the introduction, one possible application of the RTA algorithm is nonlinear ROM for FV approximation of parameter dependent conservation laws. The objective is to properly approximate the solution manifold at each instant by means of a suitable parameter (and time) dependent reduced space.  The RTA algorithm can be seen as a first step for the construction of such adapted and parameter-dependent reduced basis, required in the approximation \eqref{eq:approxt}. At this point, it remains to define the transformation maps $\varphi(\mu,\mu_i)$. The particular case of space shift transformation in the line of \cite{Welper2017,Rim2018,Rim2018-2,Rim2018Feb,Reiss2018,Black2019,Black2020Nov,Sarna2020Mar} is discussed.  \\

The scalar transport equation with periodic boundary conditions considered in Section \ref{FVtransport}, admits the following exact solution
$$
u(x,t; \mu)=u_0 (x-a(\mu)t),
$$
which is a translation of the initial condition $u^0$ along the characteristic $x = x_0+a(\mu)t$ starting from $x_0$.
Given some snapshot of the solution for an instance of the parameter $\mu_i \in \Pc$ at time $t$, the target solution for the value $\mu \in\Pc$ reads
\begin{equation}
\label{eq:reecriture}
u(x,t; \mu)=u(\varphi(\mu,\mu_i)(x,t), t; \mu_i),
\end{equation}
with the transformation map $\varphi(\mu,\mu_i)= x -  (a(\mu)-a(\mu_i)) t$,  corresponding to a shift in the space domain. For such a choice, the approximation provided by Equation \eqref{eq:approxt} is exact with only a rank $r$ equal to $1$.  Transposing this observation to the FV discrete setting, the RTA algorithm can be expected to provide, up to some discretization error depending on $\Delta x$, an approximation  $\Vphi_i^k(\mu)$ of the true numerical solution $\Vu^k(\mu)$  reconstructed from the snapshot  $\Vu^k(\mu_i)$ \cite[Example 3.6]{Rim2019}. The corresponding procedure with online-offline implementation is discussed in the forthcoming section. In practice, it provides a very simple strategy, and allows to efficiently compute  approximations of FV solutions of parameter dependent linear conservation laws from pre-computed FV solutions.
For nonlinear conservation laws, the characteristics are generally complex to compute, or even non explicitly known. In that case, efficient ROM strategies computing at the same time both reduced approximation of the solution and suitable approximation of the transformation should be preferred \cite{Cagniart2017,Rim2019}.

\section{RTA for parameter-dependent transport equation}  \label{sec:transport}

In this section, the RTA method is detailed for the parameter-dependent transport equation. The boundedness and the convergence of the approximation are then shown. Then, an efficient online/offline procedure is discussed.

\subsection{Matrix form}

For a transport equation, the transformation map related to the characteristic (see Section \ref{sec:torom}) reads
\begin{equation}
\varphi(\mu,\mu_i)(x,t) = x - \Delta a(\mu,\mu_i) t.
\label{eq:transf_transp}
\end{equation}
At time $t^k$, it corresponds to a space shift  between the characteristics associated with the two values $\mu$  and $\mu_i$ of the parameter (see illustration (a) in Figure \ref{schema:sketch})  where $\Delta a(\mu,\mu_i) t^k = (a(\mu) - a(\mu_i))t^k = k(\nu-\nu_i) \Delta x$.

\begin{figure}[H]
\centering
\includegraphics[scale=1.2]{./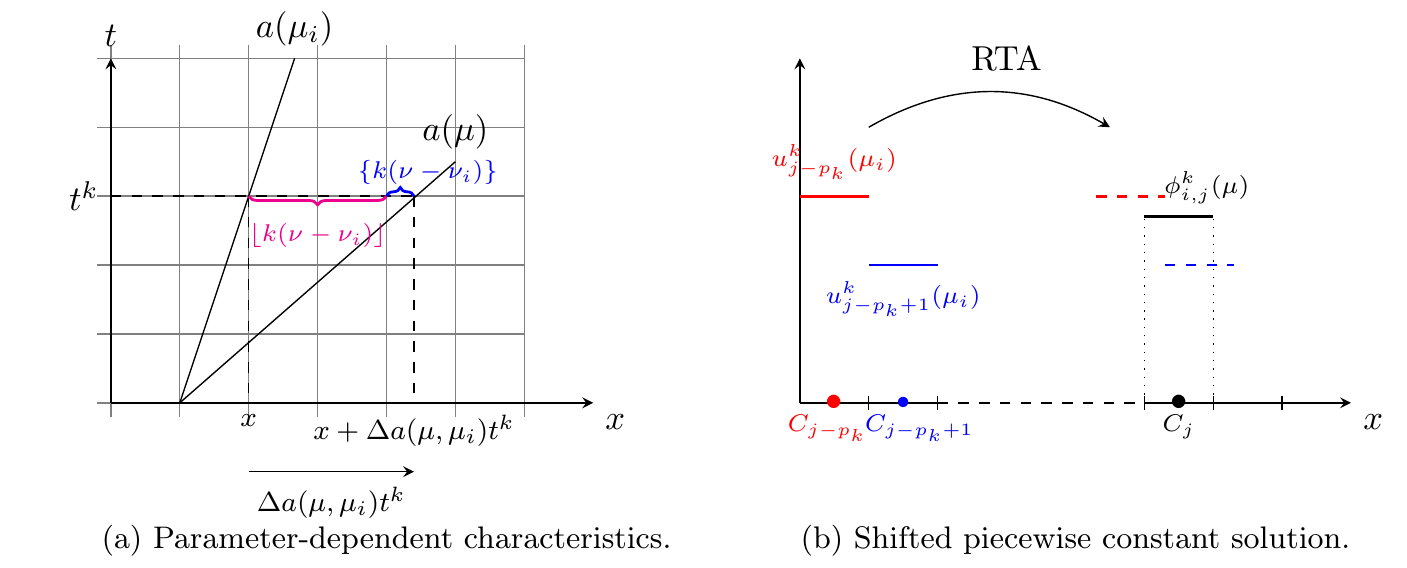}
\caption{Space shift with respect to $\mu$ at time $t^k$, illustrated for $\Delta a(\mu,\mu_i)> 0$.}
\label{schema:sketch}
\end{figure}

For any instant $t^k$,  the snapshot $\Vu^k(\mu_i)$   has been computed with the upwind scheme \eqref{eq:FVformulation}-\eqref{eq:upwindfluxes}.  Step 3 of Algorithm \ref{alg:RTA} consists in averaging the translated snapshots resulting from  Step 2 over the grid cell $C_j$ to get $\phi_{i,j}^k(\mu)$. For the sake of presentation, a uniform mesh is considered, each cell $C_j$ has a constant length $\Delta x$, although the procedure can easily be extended to non-uniform meshes. \\

The shifted snapshot is given in the cell $C_j$ by
\begin{equation}
\label{eq:shiftedsnap}
\tilde \phi_N^k(\mu,\mu_i)(x) = 
\left\{\begin{array}{rl}
u_{j-p_k}^k(\mu_i) & \text{if } x  < x^*, \\
u_{j-p_k+1}^k(\mu_i) & \text{if } x > x^*
\end{array}
\right.
\end{equation}
with $x^* \in C_j$ defined as 
\begin{equation}
\label{eq:xstar}
x^* = x_{j-p_k+1/2} + \Delta x k (\nu- \nu_i).
\end{equation}
The average value $u_{j-p_k}^k$ ($u^k_{j-p_k+1}$) associated with grid cell $C_{j-p_k}$ ($C_{j-p_k+1}$ respectively), is translated into cell $C_j$, see the sketch (b) in Figure \ref{schema:sketch}. Since periodic boundary conditions are considered, the parameter dependent index $p_k:=p_k(\mu,\mu_i) $ is defined modulo $N$ by
\begin{equation}\label{defp}
p_k(\mu,\mu_i) \equiv \left \lfloor k (\nu-\nu_i) \right \rfloor +1 ~ (\text{mod } N),
\end{equation}
where $\lfloor \cdot \rfloor$ denotes the floor function.  For the sake of readability, the dependence on  $(\mu,\mu_i)$ of the index $p_k$ is omitted when there is no ambiguity. The integral of the translated snapshot over $C_j$ (Equation \eqref{eq:average}) gives
\begin{equation}
\int_{C_j} \tilde \phi^k_N (\mu,\mu_i)(x)dx  = \int_{x_{j- 1/2}}^{x^*} u_{j-p_k}^k (\mu_i)dx 
+  \int_{x^*}^{x_{j+1/2}} u^k_{j-p_k+1}(\mu_i)dx.
\label{eq:Ij}
\end{equation}
The definition of $p_k$ leads to $(x_{j+1/2}-x^*) = \Delta x (1-\theta_i^k)$ and $(x^*-x_{j-1/2}) = \Delta x \theta_i^k$ with $\theta_i^k = \{k(\nu-\nu_i)\}$. 
Then, from Equation \eqref{eq:Ij} the following recurrence formula is obtained for the basis functions $\Vphi_i^k(\mu)$
\begin{equation}
\label{recurrence}
{\phi}_{i,j}^k(\mu) =  \left( 1- \theta_i^k \right) u_{j-p_k+1}^k(\mu_i) +  \theta_i^k u^k_{j-p_k} (\mu_i),  \quad j = 1, \dots, N.
\end{equation}
 At each time step $t^k$, the recurrence formula \eqref{recurrence}  is equivalent to   
\begin{equation}
\Vphi_i^k(\mu)= {\cal K}(k (\nu-\nu_i))  \Vu^k(\mu_i), \quad i=1, \ldots,r
 \label{recurrencem}
\end{equation}
with the generalized shifting operator ${\cal K}(k (\nu-\nu_i)) \in \RR^{N \times N}$ given by 
\begin{equation}
\label{eq:RTAshiftop}
 {\cal K}(k (\nu-\nu_i)) =
  K(\{k(\nu-\nu_i\}) L^{p_k-1}.
  \end{equation}
Let us remark that the formulas \eqref{recurrencem} and \eqref{eq:RTAshiftop} are also valid for  $\Delta a (\mu,\mu_i) <0$, in that case $p_k-1 \le 0$ so that $L^{p_k-1}= (L^{-(p_k-1)})^T$.

\begin{remark}\label{CFLlike}
Assume that $\Delta a (\mu,\mu_i)  t^k \le \Delta x$, then $k(\nu-\nu_i) \in [0,1] $ and $p=1$. In that case, Equation \eqref{recurrencem} becomes
$$
{\Vphi}_{i}^k(\mu)  =  K(k(\nu-\nu_i)) \Vu^k(\mu_i).
$$
Despite  the inequality $\Delta a (\mu,\mu_i)  t^k \le \Delta x$ can be interpreted as a CFL-like condition, it is not required here for the stability of the method. Indeed, equation \eqref{recurrencem}  is not associated with an explicit time integration scheme for the transport equation but rather pertains to a translation of a discrete solution at fixed time $t^k$. 
\end{remark}

\subsection{Properties of the RTA  method}\label{sec:error}

The approximation properties of the RTA method are discussed for the reconstruction of FV solution of parameter dependent transport equation from snapshots. After proving that the method is Total-Variation Bounded  (TVB), we provide some estimate of the approximation error between a target solution $\Vu^k(\mu)$ and the reconstructed approximation $\Vphi_i^k(\mu)$ from a snapshot $\Vu^k(\mu_i)$  at time $t^k$. 

\subsubsection{Total Variation Bounded (TVB)} \label{sec:stab}


Let us first recall the definition \cite[Definition 13.59]{Toro2013} of the {\it total variation}  of $\Vu^k(\mu) \in \RR^N$,  the FV approximation at time $t^k$.  It  is  defined as
$$
TV(\Vu^k(\mu))  = \sum_{j=1}^N |u_{j+1}^k(\mu)- u_{j}^k(\mu) |.
$$
Similar definition holds for the total variation of the reconstructed approximation $\Vphi_i^k(\mu) \in \RR^N$. 

\begin{proposition}
Let $\{\Vu^k(\mu_i)\}_{k=0}^K$ be the FV solution provided by the upwind scheme  \eqref{eq:upwind_matrix_form} at time $t^k$ under the CFL condition \eqref{eq:CFL}, with $ TV(\Vu^0) < \infty $. The RTA method applied with a space shift transform $\varphi(\mu,\mu_i)$ given by Equation  \eqref{eq:spaceshift}, is TVB. Hence, the reconstructed approximation $\{\Vphi_i^k\}_{k=0}^K$ satisfies for all $k\ge 0$
$$
TV(\Vphi_i^k(\mu)) \le TV(\Vu^0).
$$
\end{proposition}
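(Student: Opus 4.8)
The plan is to factor the desired bound through two separate monotonicity facts, one for the snapshot and one for the reconstruction operator, and to chain them as
\begin{equation*}
TV(\Vphi_i^k(\mu)) = TV\big({\cal K}(k(\nu-\nu_i))\,\Vu^k(\mu_i)\big) \le TV(\Vu^k(\mu_i)) \le TV(\Vu^0).
\end{equation*}
The first equality is just the matrix form \eqref{recurrencem}; the second inequality asserts that the generalized shifting operator does not increase total variation; the third is the classical total variation diminishing (TVD) property of the upwind snapshots. Since the discrete initial datum $\Vu^0$ does not depend on the parameter, the right-hand side is the same for every $\mu_i$, so a bound uniform in $k$ establishes the TVB property.

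For the TVD estimate on the snapshots, I would invoke that the upwind scheme \eqref{eq:upwind_matrix_form} is monotone under the CFL condition \eqref{eq:CFL} (as already recalled for the REA algorithm), and that monotone schemes are TVD, whence $TV(\Vu^{k+1}(\mu_i)) \le TV(\Vu^k(\mu_i))$ for every $k$; iterating down to $k=0$ gives $TV(\Vu^k(\mu_i)) \le TV(\Vu^0(\mu_i)) = TV(\Vu^0)$. Equivalently, one can check TVD directly from the convex-combination form $u_j^{k+1} = (1-\nu)u_j^k + \nu u_{j-1}^k$ encoded by $K(\nu)$, exactly as in the next step.

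The heart of the matter is to show $TV({\cal K}(\tilde\omega)\Vv) \le TV(\Vv)$ for any $\Vv \in \RR^N$, using the factorization \eqref{eq:RTAshiftop}, namely ${\cal K}(k(\nu-\nu_i)) = K(\omega)\,L^{p_k-1}$ with $\omega = \{k(\nu-\nu_i)\} \in [0,1)$. I would treat the two factors separately. The permutation $L^{p_k-1}$ (which equals $(L^{-(p_k-1)})^T$ when $p_k-1 < 0$) merely performs the cyclic relabelling $(L^{p_k-1}\Vv)_j = v_{j-(p_k-1)}$, so reindexing the periodic sum shows it preserves total variation exactly. For $K(\omega)$, writing $\Vw = K(\omega)\Vv$ gives componentwise $w_j = (1-\omega)v_j + \omega v_{j-1}$, so that the increment is a convex combination of consecutive increments of $\Vv$,
\begin{equation*}
w_{j+1} - w_j = (1-\omega)(v_{j+1}-v_j) + \omega(v_j - v_{j-1}).
\end{equation*}
Since $\omega, 1-\omega \ge 0$, the triangle inequality followed by summation over the periodic index $j$ (where both shifted sums equal $TV(\Vv)$) yields $TV(\Vw) \le (1-\omega)TV(\Vv) + \omega TV(\Vv) = TV(\Vv)$. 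Composing the two non-expansive factors then gives the claimed estimate for ${\cal K}$.

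I do not expect a genuine obstacle here; the only points requiring care are bookkeeping: handling the periodic boundary conditions so that the cyclic reindexing introduces no boundary terms in the sums, and covering the case $p_k-1 < 0$ (i.e. $\Delta a(\mu,\mu_i) < 0$) where the permutation factor is the transpose $(L^{-(p_k-1)})^T$, still a cyclic shift and hence TV-preserving. It is also worth emphasizing, in the spirit of Remark \ref{CFLlike}, that no CFL-type restriction on $k(\nu-\nu_i)$ is needed: the estimate holds for arbitrarily large shifts precisely because the permutation part preserves total variation regardless of its magnitude, while only the fractional part $\omega$ enters $K(\omega)$.
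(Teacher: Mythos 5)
Your argument is correct and is essentially the paper's own proof in operator form: the paper applies the triangle inequality directly to the componentwise recurrence $\phi_{i,j}^k = (1-\theta_i^k)u_{j-p_k+1}^k + \theta_i^k u_{j-p_k}^k$ (absorbing the permutation into the periodic reindexing of the sums), which is exactly your convex-combination estimate for $K(\omega)$ combined with the TV-invariance of the cyclic shift, and then concludes with the same TVD property of the upwind scheme. No gap; the factorization through $L^{p_k-1}$ and $K(\omega)$ is just a cleaner bookkeeping of the identical idea.
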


\begin{proof}
Using the recurrence formula \eqref{recurrence}, the total variation reads
$$
\begin{array}{rcl}
TV(\Vphi_i^k(\mu)) & = & \displaystyle \sum_{j=1}^N |(1-\theta_i^k)u_{j-p_k+1}^k+\theta_i^ku_{j-p_k}^k - (1-\theta_i^k)u_{j-p_k}^k-\theta_i^ku_{j-p_k-1}^k |, \\
 & \le & \displaystyle  (1-\theta_i^k) \sum_{j=1}^N |u_{j-p_k+1}^k-u^k_{j-p_k}| +\theta_i^k  \sum_{j=1}^N  | u_{j-p_k}^k - u_{j-p_k-1}^k |  = TV(\Vu^k)
\end{array}
$$
as we consider periodic boundary conditions. Then the result holds since the upwind scheme is also Total Variation Diminishing (TVD), and as  $ TV(\Vu^0)$  is finite.
\end{proof}

 \begin{remark}
More generally, the RTA method remains TVB as soon as  it is applied to reconstruct $\Vphi_i^k(\mu)$ from snapshots  $\Vu^k(\mu_i)$ provided by a TVD scheme (for more general conservation laws with periodic boundary conditions)  by means of space shift. 
\end{remark}

\subsubsection{Convergence}

Let us denote
$$
e(t^k, \mu, \mu_i) = \|u_N(t^k; \mu) - \phi_N(t^k; \mu, \mu_i)\|_1,
$$ 
the absolute error in $\sL^1$-norm between the true FV approximation $u_N(t^k; \mu) \in V_N$,
and the reconstructed approximation $\phi_N(t^k; \mu, \mu_i) \in V_N$ obtained through the RTA method associated with $\Vphi_i^k(\mu)$ at time $t^k$. This error can be bounded by three error contributions 
$$
\begin{array}{rcl} 
e(t^k, \mu, \mu_i) & = & 
\|u_N(\cdot,t^k; \mu) - u(\cdot,t^k; \mu) +  u(\cdot,t^k; \mu)- \phi_N(\cdot, t^k; \mu, \mu_i) \|_1\\
& \le &  \|u_N(\cdot,t^k; \mu) - u(\cdot,t^k; \mu)\|_1 + \| u(\varphi(\mu, \mu_i)(\cdot,t^k),t^k; \mu_i)-\phi_N(\cdot, t^k; \mu, \mu_i) \|_1\\
&\le &  \|u_N(\cdot,t^k; \mu) - u(\cdot,t^k; \mu)\|_1 +  \| u(\varphi(\mu, \mu_i)(\cdot,t^k),t^k; \mu_i)- 
 u_N(\varphi(\mu, \mu_i)(\cdot,t^k),t^k; \mu_i)\|_1\\
 &+& \|u_N(\varphi(\mu, \mu_i)(\cdot,t^k),t^k; \mu_i)- \phi_N(\cdot, t^k; \mu, \mu_i) \|_1
\end{array}
$$
as $u(x,t^k; \mu)=u(\varphi(\mu, \mu_i)(x,t^k),t^k; \mu_i)$ for the transport problem. Up to a change of variable in space, the error reads
\begin{equation}
\label{eq:erreur_approximation}
e(t^k, \mu, \mu_i) \le \|u_N(\cdot, t^k; \mu) - u(\cdot,t^k; \mu)\| +  \| u(\cdot,t^k; \mu_i)- 
 u_N(\cdot,t^k; \mu_i)\|_1 + \|(I-P_{V_N}) \tilde \phi_N^k(\mu,\mu_i) \|_1
\end{equation}
where $ \phi_N(\cdot ,t^k; \mu, \mu_i) = P_{V_N}(u_N(\varphi(\mu, \mu_i)(\cdot,t^k),t^k; \mu_i))$ with $P_{V_N}$ the $L^2$-projection on $V_N$.  The two first terms are related to the FV scheme error (see Section \ref{FVtransport}) computed with the values $\mu$ and $\mu_i$ of the parameter respectively. The last term  represents the $\sL^2$ projection error in $V_N$  for the shifted function $u_N(\varphi(\mu, \mu_i)(\cdot,t^k),t^k; \mu_i) := \tilde \phi_N^k(\mu,\mu_i)$ (see Step 3. of Algorithm \ref{alg:RTA}). In what follows, an estimate of the projection error is given. 

\begin{lemma}
Let $\{\Vu^k(\mu_i)\}_{k=0}^K$ be the FV solution provided by the upwind scheme  \eqref{eq:upwind_matrix_form} at time $t^k$ under the CFL condition \eqref{eq:CFL} with $ TV(\Vu^0) < \infty $. Let $\tilde \phi_N^k(\mu,\mu_i)$ be the shifted snapshot provided at Step 2 of Algorithm \ref{alg:RTA} with a space shift transform $\varphi(\mu,\mu_i)$ given by Equation  \eqref{eq:spaceshift}. The projection error  in $\sL^1$-norm satisfies 
$$
 \|(I-P_{V_N}) \tilde \phi_N^k(\mu,\mu_i) \|_1 = {\cal O}(\Delta x), $$
where the constant for $ {\cal O}(\Delta x)$ only depends on $TV(\Vu^0)$.
\label{lem:projection}
\end{lemma}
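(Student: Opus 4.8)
The plan is to evaluate the projection error exactly, cell by cell, using the explicit two-valued description of the shifted snapshot in Equation~\eqref{eq:shiftedsnap}, and then to sum and invoke the TVD property of the upwind scheme.

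First I would record that on each cell $C_j$ the function $\tilde\phi_N^k(\mu,\mu_i)$ takes only two values: it equals $u_{j-p_k}^k(\mu_i)$ on $(x_{j-1/2},x^*)$, of length $\Delta x\,\theta_i^k$, and $u_{j-p_k+1}^k(\mu_i)$ on $(x^*,x_{j+1/2})$, of length $\Delta x(1-\theta_i^k)$. Since $P_{V_N}$ replaces $\tilde\phi_N^k$ on $C_j$ by its average, the projected value is precisely the cell value $\phi_{i,j}^k(\mu)=(1-\theta_i^k)u_{j-p_k+1}^k+\theta_i^k u_{j-p_k}^k$ already obtained in Equation~\eqref{recurrence}.

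Second, writing $\delta_j := |u_{j-p_k+1}^k(\mu_i)-u_{j-p_k}^k(\mu_i)|$, I would compute the local error directly. On the first subinterval the integrand $|\tilde\phi_N^k-\phi_{i,j}^k|$ reduces to $(1-\theta_i^k)\delta_j$, and on the second to $\theta_i^k\delta_j$; integrating over $C_j$ gives the exact value $2\Delta x\,\theta_i^k(1-\theta_i^k)\,\delta_j$. Summing over $j$ and using $\theta_i^k(1-\theta_i^k)\le\tfrac14$ yields
$$
\|(I-P_{V_N})\tilde\phi_N^k(\mu,\mu_i)\|_1 \;\le\; \tfrac{1}{2}\,\Delta x \sum_{j=1}^N \delta_j.
$$
Because periodic boundary conditions make $j\mapsto j-p_k$ a bijection modulo $N$, the reindexed sum $\sum_{j=1}^N\delta_j$ equals $TV(\Vu^k(\mu_i))$. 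Finally, the upwind scheme being TVD under the CFL condition~\eqref{eq:CFL}, one has $TV(\Vu^k(\mu_i))\le TV(\Vu^0)$, which delivers the claimed bound $\mathcal{O}(\Delta x)$ with constant $\tfrac12\,TV(\Vu^0)$ depending only on the initial data.

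The argument is essentially a direct computation, so there is no genuine obstacle; the only point needing care is the index bookkeeping in the summation. One must verify that the shift by $p_k$ merely permutes the terms, so that the shifted nearest-neighbour differences reproduce the full total variation of the snapshot without omission or repetition — which is exactly what periodicity guarantees. I would also note that the factor $2\theta_i^k(1-\theta_i^k)$ makes the estimate sharp in $\theta_i^k$ and vanishes when the shift is an integer multiple of $\Delta x$, consistent with the observation in Remark~\ref{CFLlike} that no projection error is incurred in that case.
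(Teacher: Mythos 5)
Your proof is correct and follows essentially the same route as the paper's: the same exact cell-by-cell computation giving the local error $2\Delta x\,\theta_i^k(1-\theta_i^k)\,|u_{j-p_k+1}^k-u_{j-p_k}^k|$, the same reindexing to recover $TV(\Vu^k(\mu_i))$, and the same use of $\theta(1-\theta)\le\tfrac14$ together with the TVD property to reach the constant $\tfrac12 TV(\Vu^0)$. The index-permutation point you flag is handled identically (and just as briefly) in the paper.
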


\begin{proof}
Let $e_{V_N}^k(\mu,\mu_i)= \tilde \phi_N^k(\mu,\mu_i)  - P_{V_N} \tilde \phi_N^k(\mu,\mu_i)$.
Combining Equations \eqref{eq:shiftedsnap}-\eqref{recurrence}, we obtain for $x \in C_j$
  $$
 e_{V_N}^k(\mu,\mu_i)(x)= 
  \llbracket u^k(\mu_i)    \rrbracket_{j-p_k+1/2}  \left\{\begin{array}{rl}
\theta_i^k-1 & \text{if } x< x^*, \\
\theta_i^k & \text{if } x> x^*
  \end{array}
  \right.
  $$
where $  \llbracket u^k(\mu_i)    \rrbracket_{j-p_k+1/2} = u_{j-p_k+1}^k(\mu_i) -u_{j-p_k}^k(\mu_i) $  is the jump of $u_N^k(\mu_i)$ at coordinate $x_{j-p_k+1/2}$. If $\{k(\nu-\nu_i)\} =0$, the shifted function coincides with its projection so that $e_{V_N}^k(\mu,\mu_i)= 0$.\\

Taking the $\sL^1$-norm over the cell $C_j$ leads to
$$
\|e_{V_N}^k(\mu,\mu_i)\|_{1,C_j} = \int_{C_j}|e_{V_N}^k(\mu,\mu_i)(x)| dx = |\llbracket u^k(\mu_i)\rrbracket_{j-p_k+1/2}| \left( (1-\theta_i^k) (x^*-x_{j-1/2})+\theta_i^k (x_{j+1/2}-x^*)\right).
$$
using  $(x_{j+1/2}-x^*) = \Delta x (1-\theta_i^k)$ and $(x^*-x_{j-1/2}) = \Delta x \theta_i^k$  together with $\theta_i^k \in [0,1]$,  we get 
$$
\|e_{V_N}^k(\mu,\mu_i)\|_{1,C_j}  = 2 \Delta x\left|u^k_{j-p_k+1}(\mu_i) - u^k_{j-p_k}(\mu_i)\right|(1-\theta_i^k)\theta_i^k.
$$
Summing over $j$, and up to index permutation, we get the global error estimate
$$
\|e_{V_N}^k(\mu,\mu_i)\|_{1,\Omega} = \sum_{j=1}^n \|e_{V_N}^k(\mu,\mu_i)\|_{1,C_j}= 2\Delta x  (1-\theta_i^k)\theta_i^k TV(\Vu^k(\mu_i)).
$$
Observing that $(1-\theta_i^k)\theta_i^k\le \frac 14$, and that the upwind scheme is TVD, we get 
$$
\|e_{V_N}^k(\mu,\mu_i)\|_{1,\Omega}  \le \frac {\Delta x}{2}  TV(\Vu^0). 
$$
\end{proof}

\begin{remark}
In our study, we express the error in term of the $\sL^1$-norm as it is natural for hyperbolic conservation laws. For linear hyperbolic problem, as the transport equation considered here,  one can show that the projection error in $\sL^2$-norm behaves as $ {\cal O} (\Delta x ^{1/2})$.
\end{remark}

We conclude this section, proving that the RTA method is convergent. 

\begin{proposition}
We consider $u_N(t^k; \mu) \in V_N$ the FV solution provided by the upwind scheme  \eqref{eq:upwind_matrix_form} at time $t^k$ under the CFL condition \eqref{eq:CFL} with $ TV(\Vu^0) < \infty $. Moreover, let $\phi(t^k; \mu, \mu_i) \in V_N$ be the reconstructed approximation obtained from $\Vphi_i^k(\mu)$ computed with  the RTA method with a space shift transform $\varphi(\mu,\mu_i)$ given by Equation  \eqref{eq:spaceshift}. We have that 
$$
 \lim_{\Delta x \to 0 } e(t^k, \mu, \mu_i) =0.
 $$
\end{proposition}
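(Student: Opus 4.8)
The plan is to build directly on the error decomposition \eqref{eq:erreur_approximation}, which already splits $e(t^k,\mu,\mu_i)$ into three nonnegative contributions, and to show that each of them vanishes as $\Delta x \to 0$. Since the heavy lifting has been done in assembling that decomposition and in proving Lemma \ref{lem:projection}, the proof reduces to invoking the convergence results established earlier in the paper and summing the resulting bounds.

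First I would handle the two FV-scheme error terms $\|u_N(\cdot,t^k;\mu)-u(\cdot,t^k;\mu)\|_1$ and $\|u(\cdot,t^k;\mu_i)-u_N(\cdot,t^k;\mu_i)\|_1$. Each is precisely the $\sL^1$ discretization error of the upwind scheme, for the parameter values $\mu$ and $\mu_i$ respectively, at the fixed time $t^k$. By the convergence of the REA/upwind scheme recalled in \eqref{eq:upconv}, both terms tend to zero as $\Delta x \to 0$. Here one must keep in mind that, under the CFL condition \eqref{eq:CFL}, refining the mesh forces $\Delta t \to 0$ as well, so that the fixed instant $t^k$ is reached after $k = t^k/\Delta t \to \infty$ steps; the result \eqref{eq:upconv} is stated precisely for a fixed time and thus applies verbatim to each of these two terms.

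Next I would dispose of the projection error term $\|(I-P_{V_N})\tilde\phi_N^k(\mu,\mu_i)\|_1$ by direct appeal to Lemma \ref{lem:projection}, which yields the bound $\mathcal{O}(\Delta x)$ with a constant depending only on $TV(\Vu^0)$; the latter is finite by hypothesis. Combining this with the two scheme-error estimates and letting $\Delta x \to 0$ then gives $e(t^k,\mu,\mu_i) \to 0$, which is the claim.

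The argument is essentially a bookkeeping assembly of pre-established facts, so I do not anticipate a genuine technical obstacle. The one point deserving care is conceptual: one must ensure the limit is legitimate at the fixed physical time $t^k$ while the step count $k$ itself diverges. This is exactly what the fixed-time statement \eqref{eq:upconv} guarantees for the two scheme errors, while the projection bound of Lemma \ref{lem:projection} is uniform in the (growing) shift index $p_k$ and in the fractional part $\theta_i^k \in [0,1]$, hence independent of $k$ by construction; consequently none of the three contributions degenerates as the mesh is refined.
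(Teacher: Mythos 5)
Your proof follows exactly the paper's argument: it bounds $e(t^k,\mu,\mu_i)$ via the decomposition \eqref{eq:erreur_approximation}, sends the two upwind-scheme error terms to zero by the convergence result \eqref{eq:upconv}, and controls the projection term with Lemma \ref{lem:projection}. The additional remark about the step count $k = t^k/\Delta t$ diverging at fixed physical time is a sensible clarification but does not change the route; the proposal is correct and essentially identical to the paper's proof.
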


\begin{proof}
According to Equation \eqref{eq:erreur_approximation}, the approximation error is bounded by three contributions. The two first terms are related to the FV scheme error related to the parameters $\mu, \mu_i$. Under the CFL condition, these two terms vanish as $\Delta x \to 0$ since the upwind scheme is convergent by Equation \eqref{eq:upconv}. Moreover by Lemma \ref{lem:projection}, the last contribution behaves as ${\cal O}(\Delta x)$. So $ \lim_{\Delta x \to 0 } e(t^k, \mu, \mu_i) =0$ which proves the result.
\end{proof}

\subsection{Practical aspects}

For each time step $k$, we recall that the vector $\Vphi_i^k(\mu)$ is given by
 $$
\Vphi_i^k(\mu) = {\cal K}(\theta_i^k)  \Vu^k(\mu_i)   = K(\theta_i^k)  L^{p_k-1} \Vu^k(\mu_i)   = 		 (1-\theta_i^k)  L^{p_k-1}  \Vu^k(\mu_i) +   \theta_i^k   L^{p_k}  \Vu^k(\mu_i).
$$
By multiplying this equation by $(L^{p_k-1})^T $ and taking the scalar product of the last equality with the canonical basis vector $\Ve_{j+p_k-1}$, we get 
\begin{equation}
\phi^k_{i,j+p_k-1} (\mu) =  (1-\theta_i^k)  u_j^k(\mu_i)  + \theta_i^ku_{j-1}^k(\mu_i).
\label{eq:online}
\end{equation}
In such a manner,  the component  $j+p_k-1$ of $\Vphi^k_{i}$ is a linear combinaison of the components  of $\Vu^k(\mu_i)$ and $L\Vu^k(\mu_i)$ depending on $\nu-\nu_i$ through $\theta_i^k = \{k(\nu-\nu_i)\}$.
In  the same lines as  projection based ROM methods, the proposed RTA procedure could be recast in an {\it offline-online} fashion.
First, during the online stage, we compute the snapshot  $\{\Vu^k(\mu_i)\}_{k=0}^K$ of the numerical solution for a given value of $\mu_i \in \Pc$. 
This step can be costly as it requires $K$ steps whose complexity depends on $N$ through matrix vector products required for the upwind scheme, see Equation \eqref{eq:upwind_matrix_form}.
During the online stage, for a given parameter $\mu$ and time $t^k$,  we evaluate the parameter dependent quantities,  namely $\theta_i^k$ and  $p_k$. Then, if required, the vector $\Vphi_i^k(\mu)$ is deduced from Equation \eqref{eq:online}. \\

Such an offline-online procedure allows to compute efficiently a reduced approximation of the FV solution $\Vu^k(\mu)$ of parameter-dependent  transport equation  from any snapshot  $\Vu^k(\mu_i)$ at each time $t^k$ for any parameter value $\mu$.  Indeed by computing $\Vphi^k_i(\mu)$ for any time $t^k$, we obtain without any time stepping procedure nor projection step a rank one approximation for time $t^k$ of $\Vu^k(\mu)$.\\

\begin{remark} \label{rk:dico}
Let  ${\cal D}_s^k = \{\Vu^k(\mu_1), \dots,  \Vu^k(\mu_s)\}$ be a dictionary formed from $s$ snapshots of the FV solution provided by Equation \eqref{eq:conservativeForm} at the time $t^k$. We could imagine a selective procedure to compute for a given time $t^k$ and a new instance $\mu$ of the parameter the best approximation $\Vphi_i^k(\mu)$ from a snapshot computed for a parameter $\mu_i$. The idea is to select in the dictionary, the snapshot which minimizes the approximation error $e(t^k,\mu,\mu_i)$. This point is under investigation, in particular  it requires a computable and sharp a posteriori error estimate bound for  $e(t^k,\mu,\mu_i)$.
  \end{remark}

\section{Numerical examples}

In this part, the behavior of the proposed reconstruction method is illustrated for the approximation of the solutions of a parameter dependent transport problem and the wave equation.

\subsection{Scalar transport equation}

Consider a one-dimensional domain $\Omega = (-L,L)$, with $L=10$m, in which the scalar transport equation \eqref{eq:syscons} with a linear flux function holds, supplemented with periodic boundary conditions and the following initial condition
$$
u^0(x)=\left\{\begin{split}
& u_L \quad \text{if } - \frac{L}{3}\leq x \leq \frac{L}{3}, \\
& u_H \quad \text{otherwise,}
\end{split}\right. 
$$
where $u_L = -1$ and $u_H = 1$ denote initial low and high values. The wavespeed $a(\mu)$ depends on the parameter $\mu \in [0,1]$ through the affine function $a(\mu)=\alpha \mu + \beta$. Here $\alpha, \beta$ are two constants whose values are here set at $5$ and $2$ respectively. \\

In what follows, we perform some comparison between the reconstructed approximation $\Vphi_i^k(\mu)$ provided by the RTA method, from given snapshots $\Vu^k(\mu_i)$ computed with the upwind scheme, to approximate the FV solution $\Vu^k(\mu)$ for a target value of the parameter $\mu \in \Pc$. The mesh consists here of $N=250$ cells, and the maximum CFL number is set at $0.8$ for $\mu=1$. The RTA solution is here reconstructed from a snapshot obtained for $\mu_i=0.4$. Figure \ref{transportSolution} shows a comparison between RTA and FV approximation computed for a parameter value $\mu=0.8$ at different times.  It can be observed  that the two approximations coincide. Moreover, the RTA solution does not exhibit any oscillating behavior, nor tends to depart more from the FV reference solution with time.
\begin{figure}[H]
\subfigure[$t=7.22 \times 10^{-1}$ s.]{\label{transport1}
\includegraphics[width=0.33\textwidth]{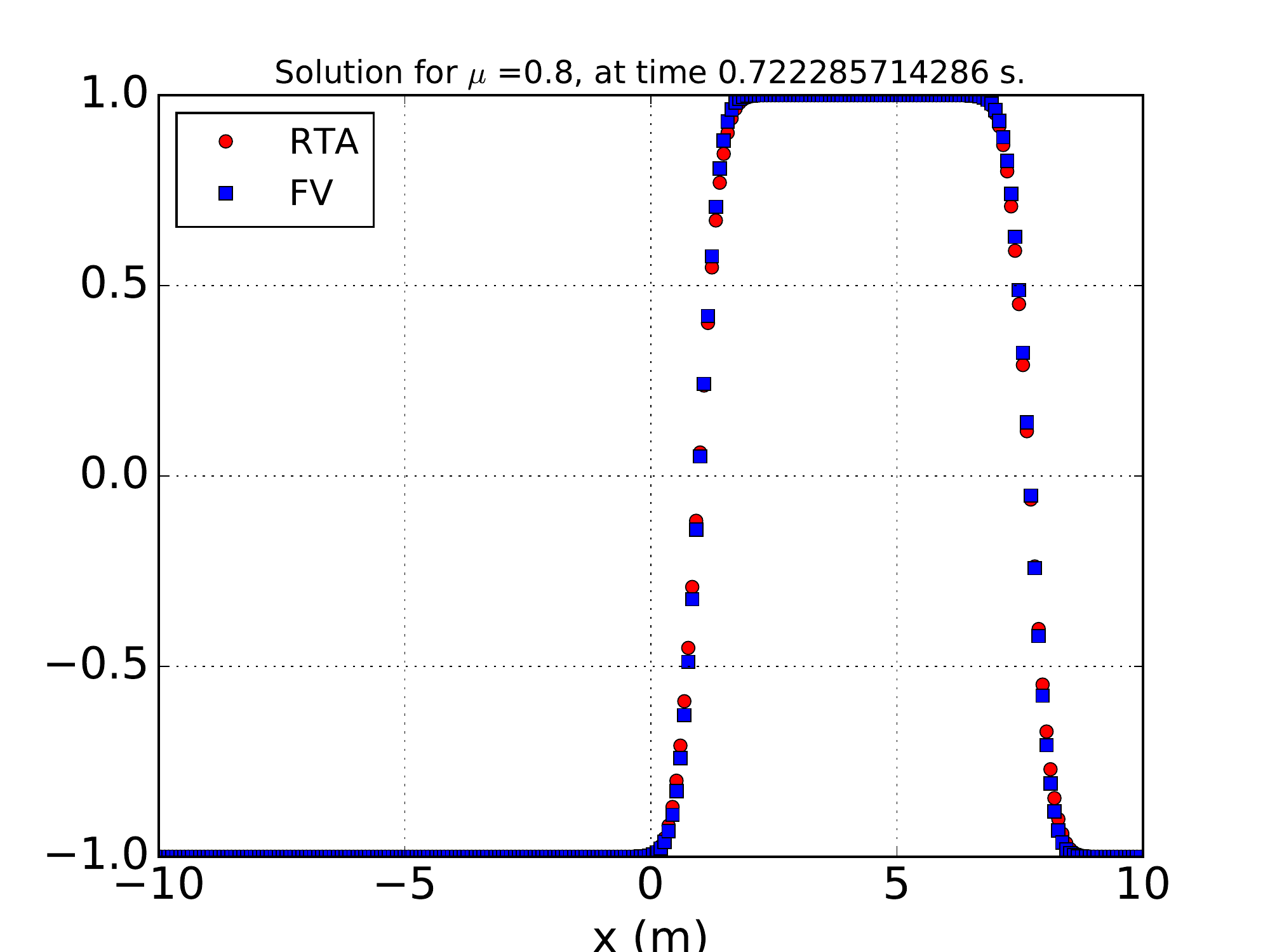} 
}
\subfigure[$t=2.16 \times 10^{-1}$ s.]{\label{transport1}
\includegraphics[width=0.33\textwidth]{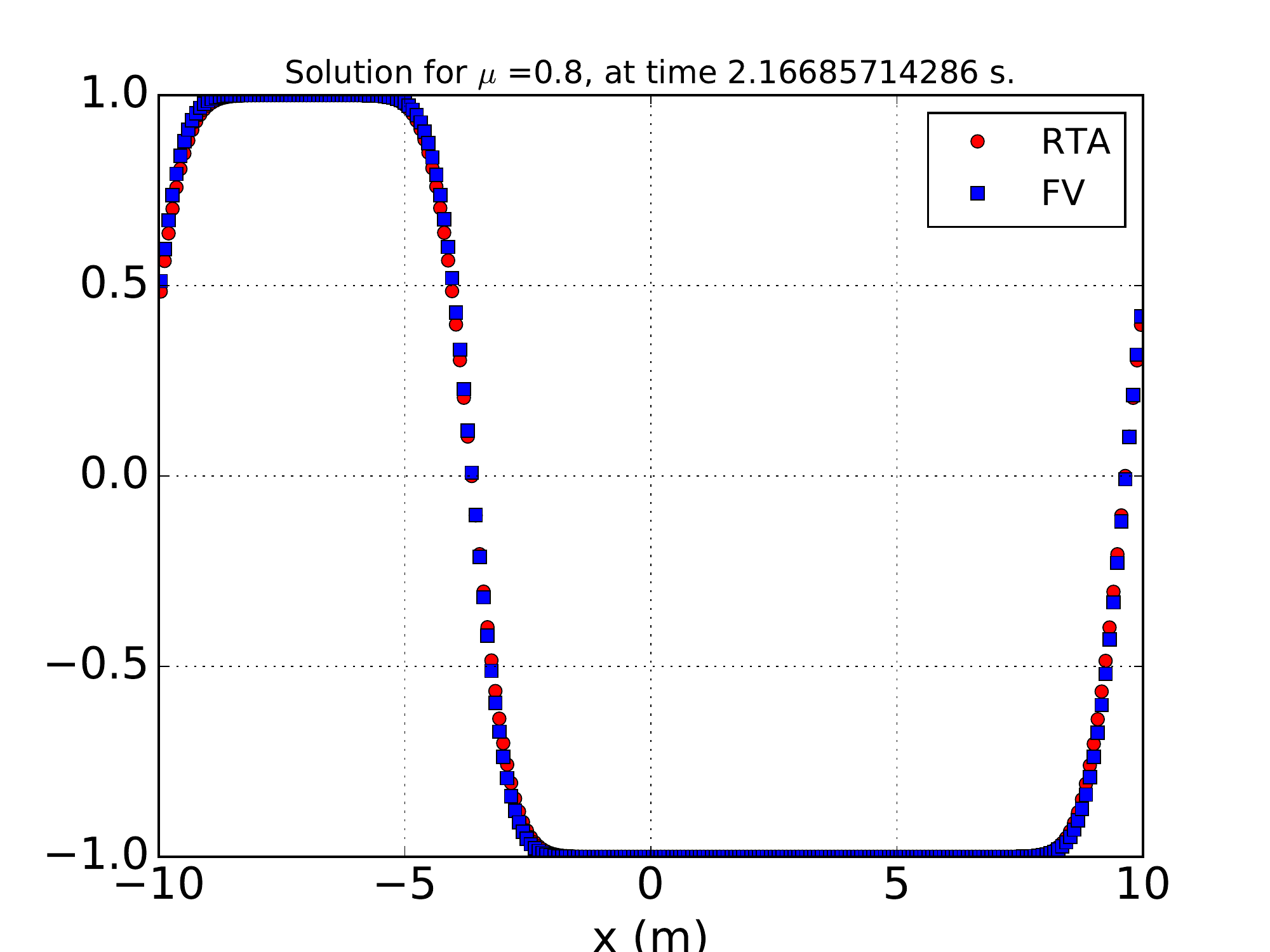} 
}
\subfigure[$t=8.14 \times 10^{-1}$ s.]{\label{transport1}
\includegraphics[width=0.33\textwidth]{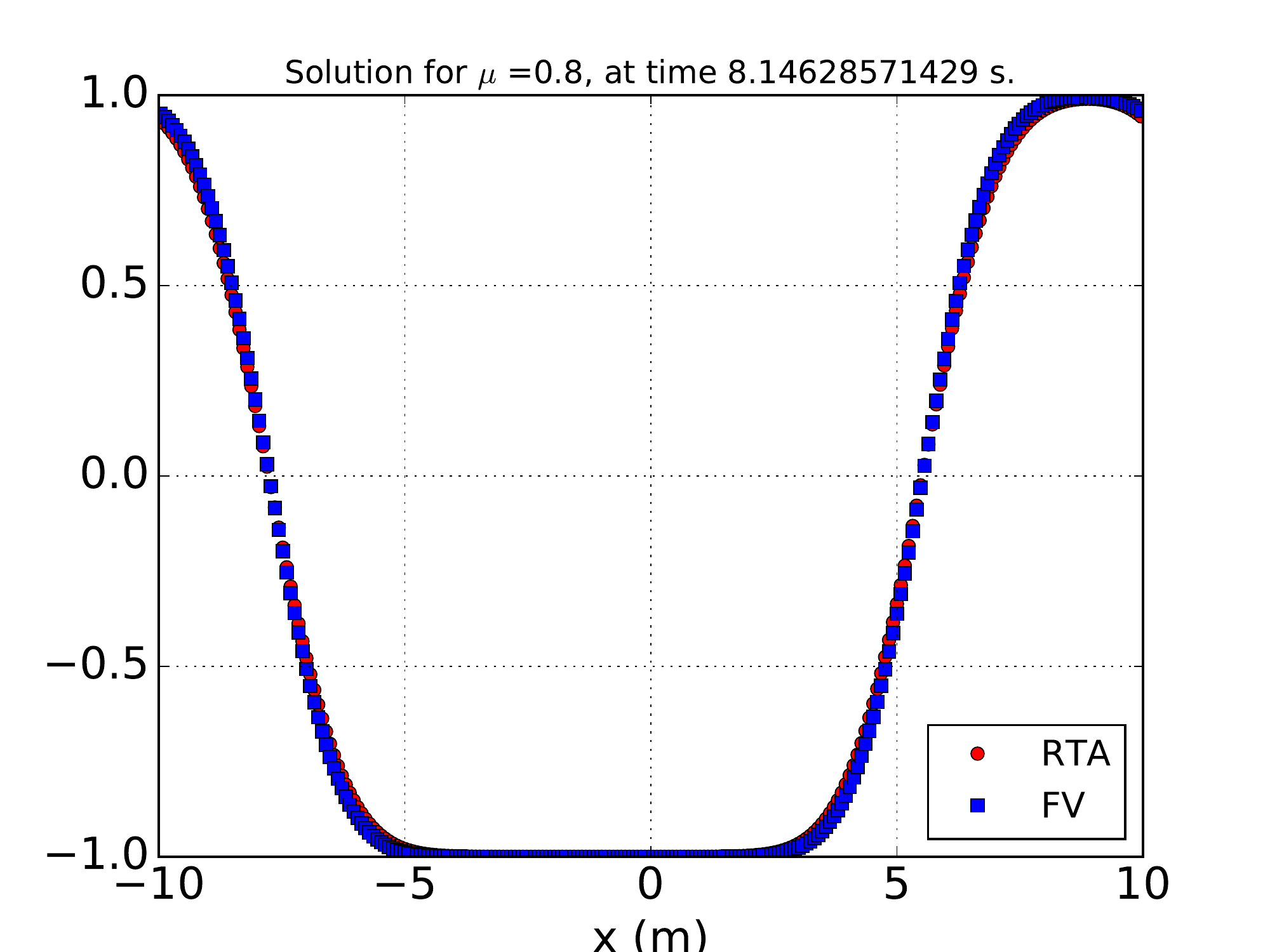} 
}
\caption{Comparison of RTA approximation computed from a snapshot obtained $\mu_i=0.4$ with the FV approximation computed for $\mu=0.8$ at different times. }
\label{transportSolution}
\end{figure}

To study the convergence of the RTA,  let us consider both absolute and relative error in discrete $L^1$ norm between $\Vphi_i^k(\mu)$ and $\Vu^k(\mu) $, at time $t^k$, given respectively by 
$$
e_a^k(\mu, \mu_i)= \Delta x \sum_{j=1}^N |\phi_{i,j}^k(\mu) - u_j^k(\mu) |  \text{ and }  e_r^k(\mu, \mu_i) =  \dfrac{ \sum_{j=1}^N |\phi_{i,j}^k(\mu) - u_j^k(\mu) |  }{\sum_{j=1}^N |u_{j}^k(\mu)|}.$$
Figure \ref{L1convCurves} shows both absolute and relative $L^1$ errors computed for different instances of the parameter $\mu$. Here, the reconstructed approximation, provided by the RTA method, is computed from a snapshot given for $\mu_i=0.65$. In accordance with theoretical results of Section \ref{sec:error}, the method converges since the approximation error decreases  with $\Delta x$. Moreover, the observed convergence rates are of the order of $1/2$ corresponding to those of the FV scheme for the approximation of discontinuous solutions, see \cite[Section 8.7]{Leveque2002}.  In that particular case, the projection error arising in Equation \eqref{eq:erreur_approximation} is negligible compared to the one of the FV scheme. The different constants and rates observed at the final time depend on $|\mu-\mu_i|$ as well as on the fractional part $\{k(\nu - \nu_i)\}$. This last observation motivates the interest of a selective procedure, as discussed in Remark  \ref{rk:dico}, for selecting the best snapshot in a pre-computed dictionary to minimize the approximation error. 

\begin{figure}[H]
\begin{center}
\includegraphics[width=0.8\textwidth]{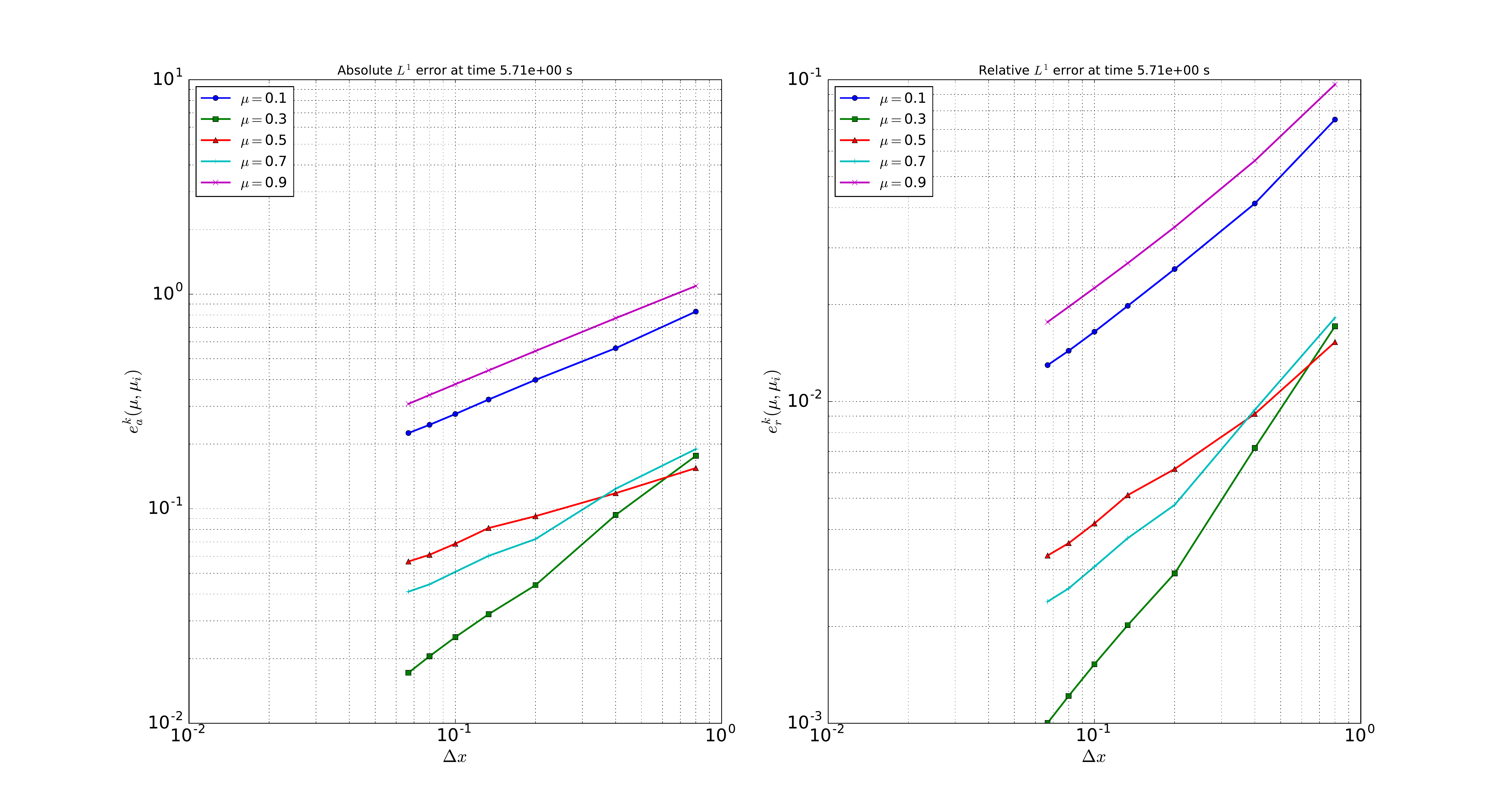} 
\end{center}
\caption{Convergence curves of $L^1$ absolute and relative errors computed for different values of the parameter $\mu$.}
\label{L1convCurves}
\end{figure}
Figures \ref{timeL1convCurvesMui} and \ref{timeL1convCurvesMeshes} show time evolution of the $L^1$ relative error superposed for different mesh sizes and  values of the parameter $\mu$. First, it can be observed on the two figures that the error remains globally of the same order of magnitude in time for a given mesh and parameter value, and do not increase exponentially. Indeed, contrary to time stepping algorithms, the proposed RTA method only requires the snapshots $\Vu^k(\mu_i)$ at current time $t^k$. In consequence, the approximation errors do not accumulate with time iteration. Second, it can be seen in Figure \ref{timeL1convCurvesMui} that error curves associated with the different instances of the parameter may cross in time. One possible explication of this observation is that the error is related to the fractional part $\{k(\nu-\nu_i)\}$ which depends on both $\mu, \mu_i$ and $k$. In particular, when the snapshots are aligned with the mesh at the instant $t^k$ one could expect a smallest error. 
Finally, as the mesh is refined, Figure \ref{timeL1convCurvesMeshes} shows that the error decreases monotonically as observed in Figure \ref{L1convCurves}.

\begin{figure}[H]
\begin{center}
\includegraphics[width=1.0\textwidth]{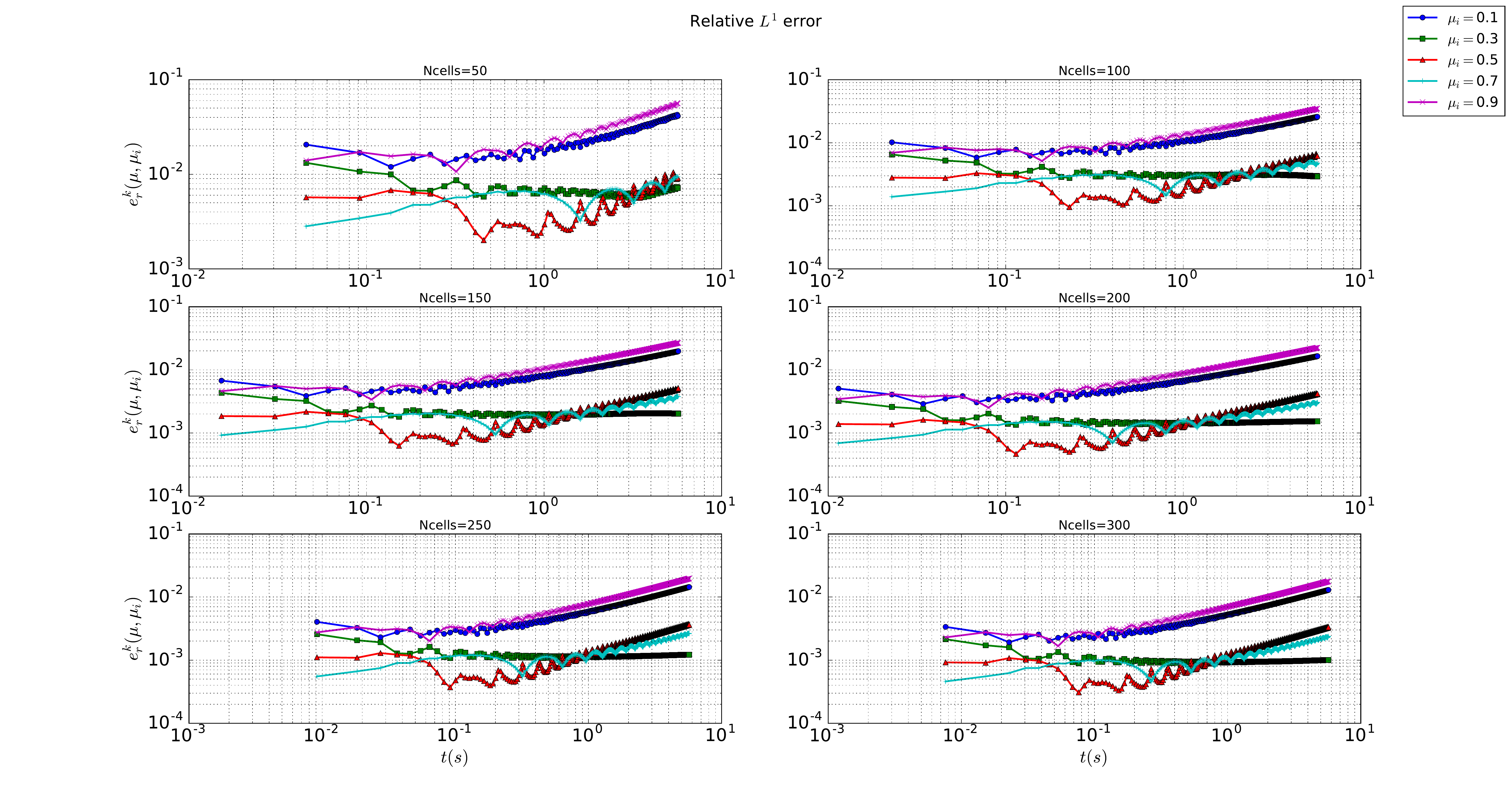} 
\end{center}
\caption{Time evolution of the $L^1$ relative error computed for different meshes and superposed for different values of the parameter $\mu$.}
\label{timeL1convCurvesMui}
\end{figure}
\begin{figure}[H]
\begin{center}
\includegraphics[width=1.0\textwidth]{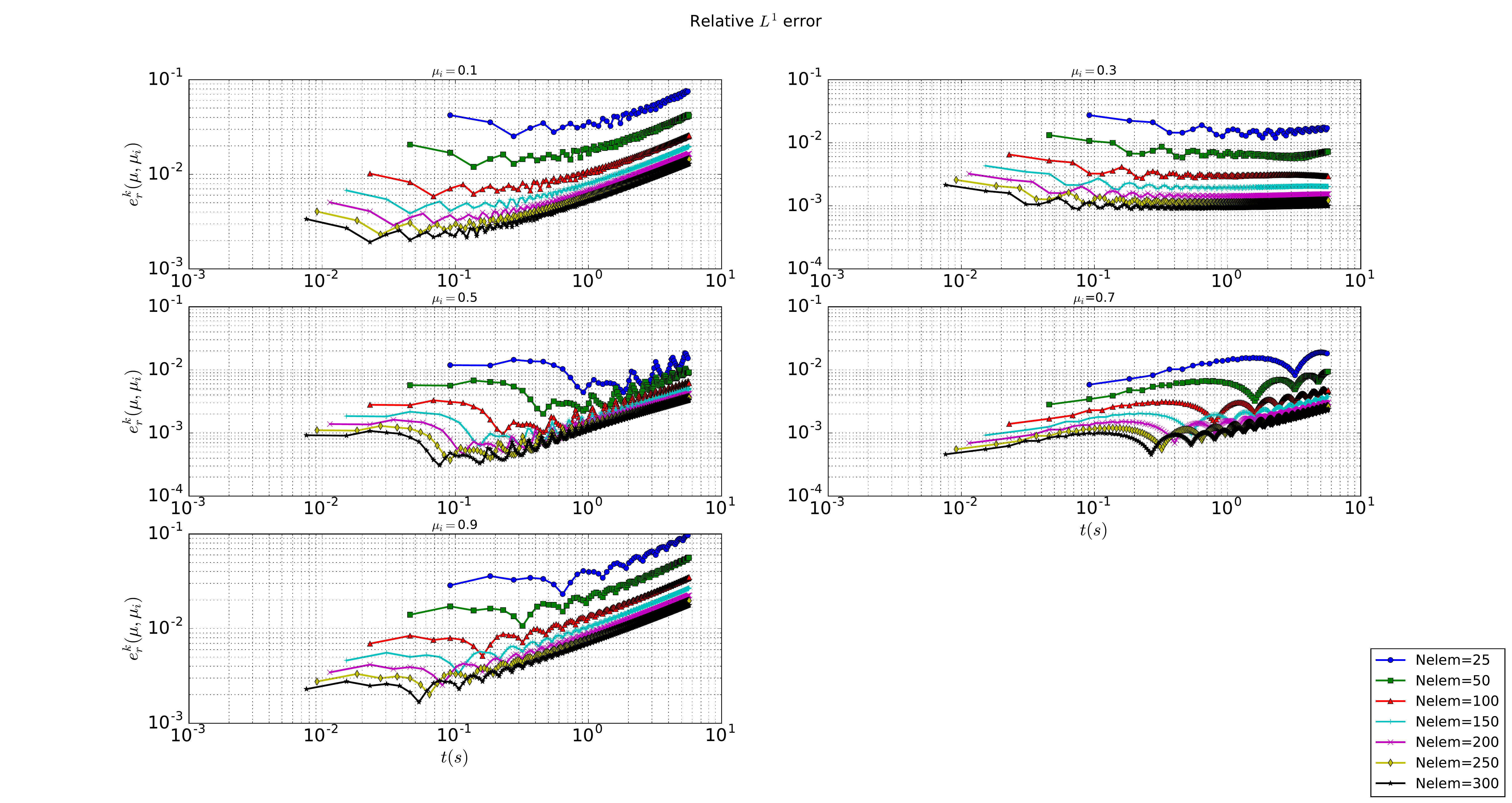} 
\end{center}
\caption{Time evolution of the $L^1$ relative error computed for different instances of the parameter $\mu$, superposed for different  meshes.}
\label{timeL1convCurvesMeshes}
\end{figure}

\subsection{System of linear conservation laws}

We conclude this section, with some numerical experiment on multiple transport parameter-dependent problem. 

\subsubsection{Linear elastodynamics}

Consider a bar, whose elastodynamic response in the isothermal and linearized geometrical framework is governed by the system of conservation laws \eqref{eq:syscons}, particularized with the following conserved variables $u : \Omega \times I \to \RR^2$ and flux $f : \RR^2 \to \RR^2$ given by
$$
u = \left(\begin{matrix}
 \sigma \\
 v
\end{matrix} \right) \quad \text{ and } \quad f(u) = \left( \begin{matrix}
 -E v \\
 - \frac{\sigma}{\rho}
\end{matrix}
 \right),
$$
where $\sigma : \Omega \times I \to \RR$ and $v : \Omega \times I \to \RR$ denote the Cauchy stress and the velocity respectively.  Moreover, $E$ and $\rho$ are two constants, corresponding to the Young modulus and the mass density respectively. This system of equations is also supplemented with periodic boundary conditions and given initial condition $u^0$. The above system can be set into a characteristic form \cite{Toro2013} which is given through the decoupled system of two equations
\begin{equation}
\label{decouple}
\begin{array}{rcl}
\dfrac{\partial w_1}{\partial t} + c \dfrac{\partial w_1}{\partial x} &=& 0, \\[0.2cm]
\dfrac{\partial w_2}{\partial t} -c \dfrac{\partial w_2}{\partial x} &=&0,
\end{array}
 \qquad \text{ on } \Omega \times I.
\end{equation}
where $c=\sqrt{E/\rho}$ is the elastic celerity of the bar. The vector $w = (w_1,w_2)^T$  contains the characteristic variables $w_l : \Omega \times I \to \RR$ which satisfy $u = R w$ with 
 $R \in \RR^{2 \times 2}$ the matrix of right eigenvectors of the jacobian matrix of the flux $f$ defined as
\begin{equation}\label{eigenvectors} 
R = \left(\begin{matrix}
\rho c & - \rho c \\
1 & 1 
\end{matrix}
\right).
\end{equation}
In what follows, the elastic celerity $c(\mu)=\sqrt{E(\mu)/\rho}$ depends on the parameter $\mu \in \Pc$, through the Young modulus $E(\mu)$. 
We consider the problem of computing an approximation of the FV solution of this elastodynamic problem for a target value $\mu$. To that goal, the RTA method is considered for approximating the FV solution of each independent characteristic equation. First, at each time $t^k$, provided snapshots $\mathbf{u}^k(\mu_i) \in \RR^N \times \RR^2$, the corresponding FV approximations of characteristic quantities $\mathbf{w}^k(\mu_i)  \in \RR^N \times \RR^2$ are  computed as
\begin{equation}
\mathbf{w}^k(\mu_i) = \mathbf{u}^k(\mu_i) R^{-T}.
\end{equation}
Next, the reconstructed approximations computed by the RTA method are 
\begin{equation}
{\Vphi}_{i,l}^k (\mu)= {\cal K}(k (\nu-\nu_i)) \boldsymbol{w}_{l}^k(\mu_i), \quad l=1,2
\end{equation}
where $\boldsymbol{w}_{l}^k(\mu_i) \in \RR^N$ are the columns of $\mathbf{w}^k(\mu_i)$. Here ${\Vphi}_{l,i}^k (\mu) \in \RR^N$ stands for the RTA reconstruction from $\boldsymbol{w}_{l}^k(\mu_i)$. Finally, the numerical approximation of the conservative variables $u$ for the value $\mu$ are obtained by recombining the reconstructed approximations of characteristic variables ${\Vphi}_{i,l}^k (\mu), l=1,2$, using the matrix $R$
\begin{equation}
\label{eq:rebuilt}
\VPhi_{i,l}^k (\mu) = \sum_{m=1}^2 R_{lm} {\Vphi}_{i,m}^k (\mu) 
\end{equation}
where $\VPhi_{i,l}^k (\mu) \in \RR^N$ consists of the approximations for the translated stresses ($l=1$) and for the velocity ($l=2$) respectively.

\subsubsection{Results}

For the following simulations, the Young modulus $E(\mu)$ depends on $\mu\in [0,1]$ through $E(\mu)=c_0 \mu + c_1$, with $c_0,c_1$ being two constants set at $19\times 10^{10}$ Pa and $10^{11}$ Pa respectively. The mass density is set at $7800$ kg.m$^{-3}$. The bar is initially free of any stresses $\sigma^0 =0$. Riemann-type initial conditions are prescribed on the velocity field so that  $v^0=1$m.s$^{-1}$ in the first half of the medium $x \in [-L,0)$, and $v^0=0$ in the second half $x \in (0,L]$, so that compression first occurs at the middle of the computational domain.

Figure \ref{w_elastoSolution} shows some superposed plots of the approximations of the characteristic variables  $w_l(\mu) = \frac 12 (\mp \frac{\sigma (\mu) }{\rho c(\mu) } + v(\mu))$, $l=1,2$ computed with the RTA  method and the FV scheme for $\mu=0.8$, as well as the snapshot consisting of a FV approximation obtained with $\mu=0.05$. The mesh consists of $250$ cells, and the maximum $CFL$ number is set at $0.8$ for $\mu=1$. It can be observed a very good agreement on both characteristic variables between RTA and FV approximation for the instance value $\mu=0.8$. As the time increases, the space shift between the snapshot and the RTA solution also significantly increases, showing that the CFL-like condition $\Delta a(\mu,\mu_i)t^k \leq \Delta x$ of remark \ref{CFLlike} is not satisfied.\\

\begin{figure}[H]
\begin{center}
\subfigure[$t=1.29 \times 10^{-3}$ s.]{\label{w_elasto1}
\includegraphics[height=0.3\textheight]{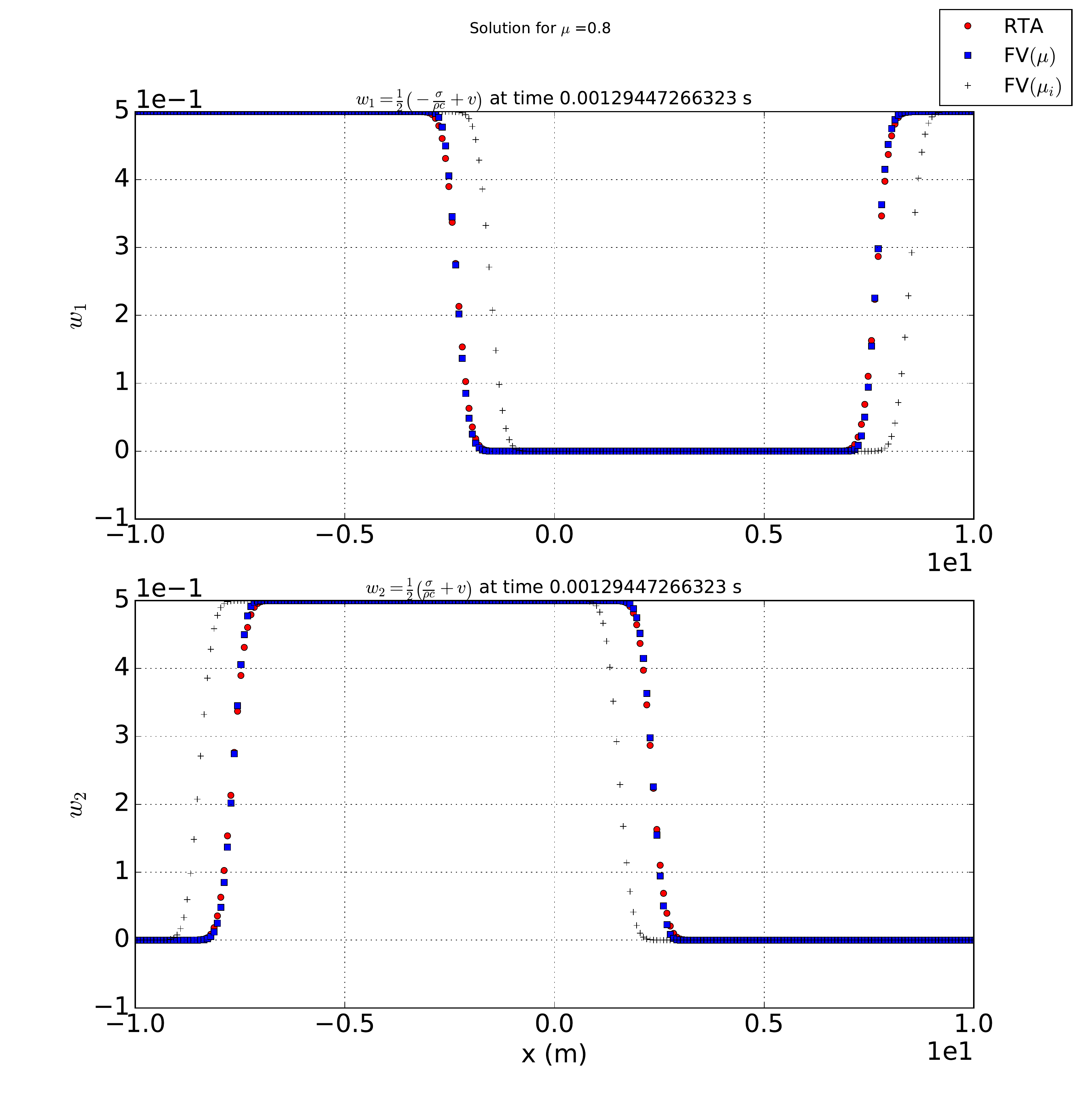} 
}
\subfigure[$t=3.88 \times 10^{-3}$ s.]{\label{w_elasto2}
\includegraphics[width=0.3\textheight]{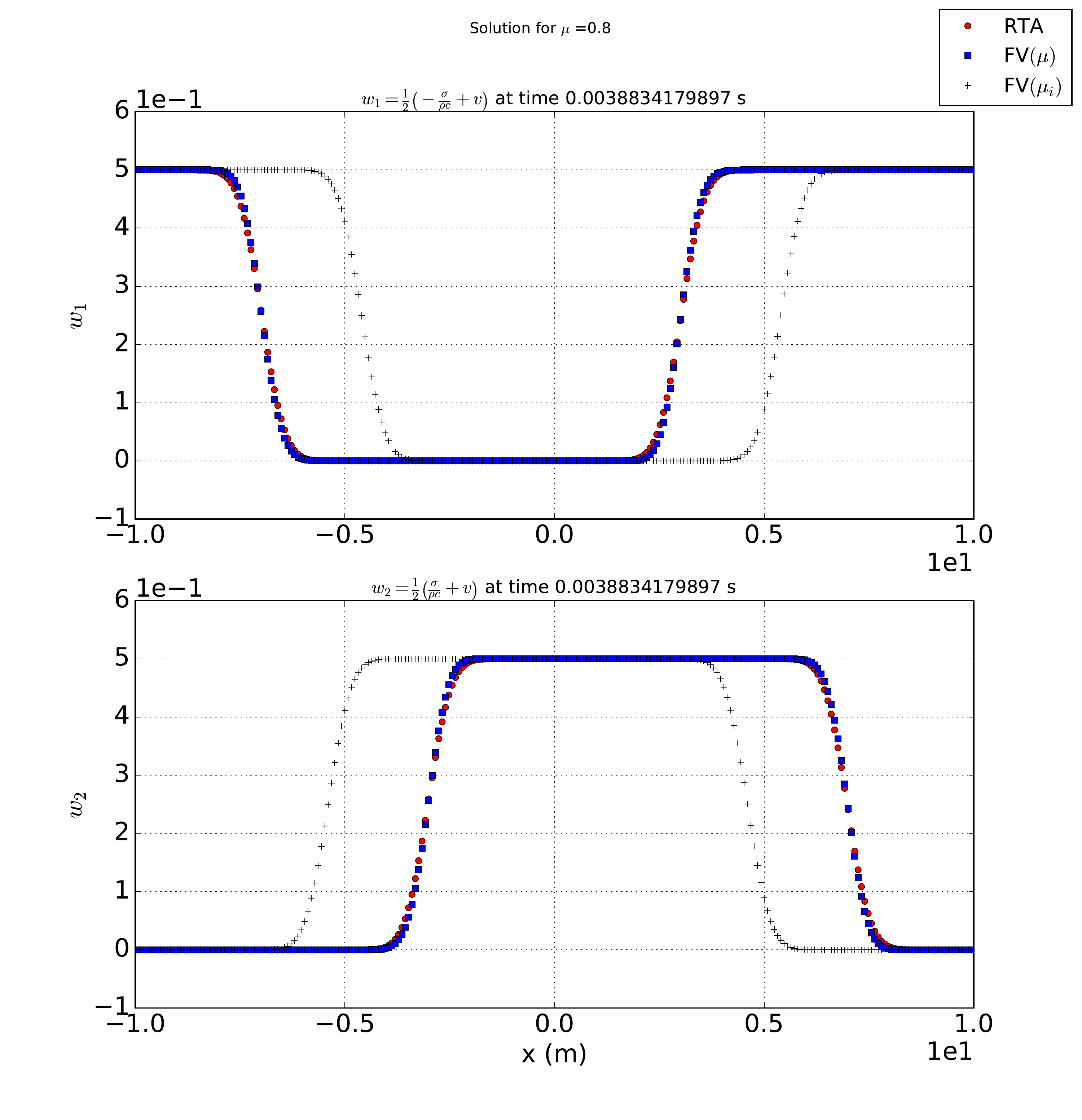} 
} \\
\subfigure[$t=9.06 \times 10^{-3}$ s.]{\label{w_elasto3}
\includegraphics[width=0.3\textheight]{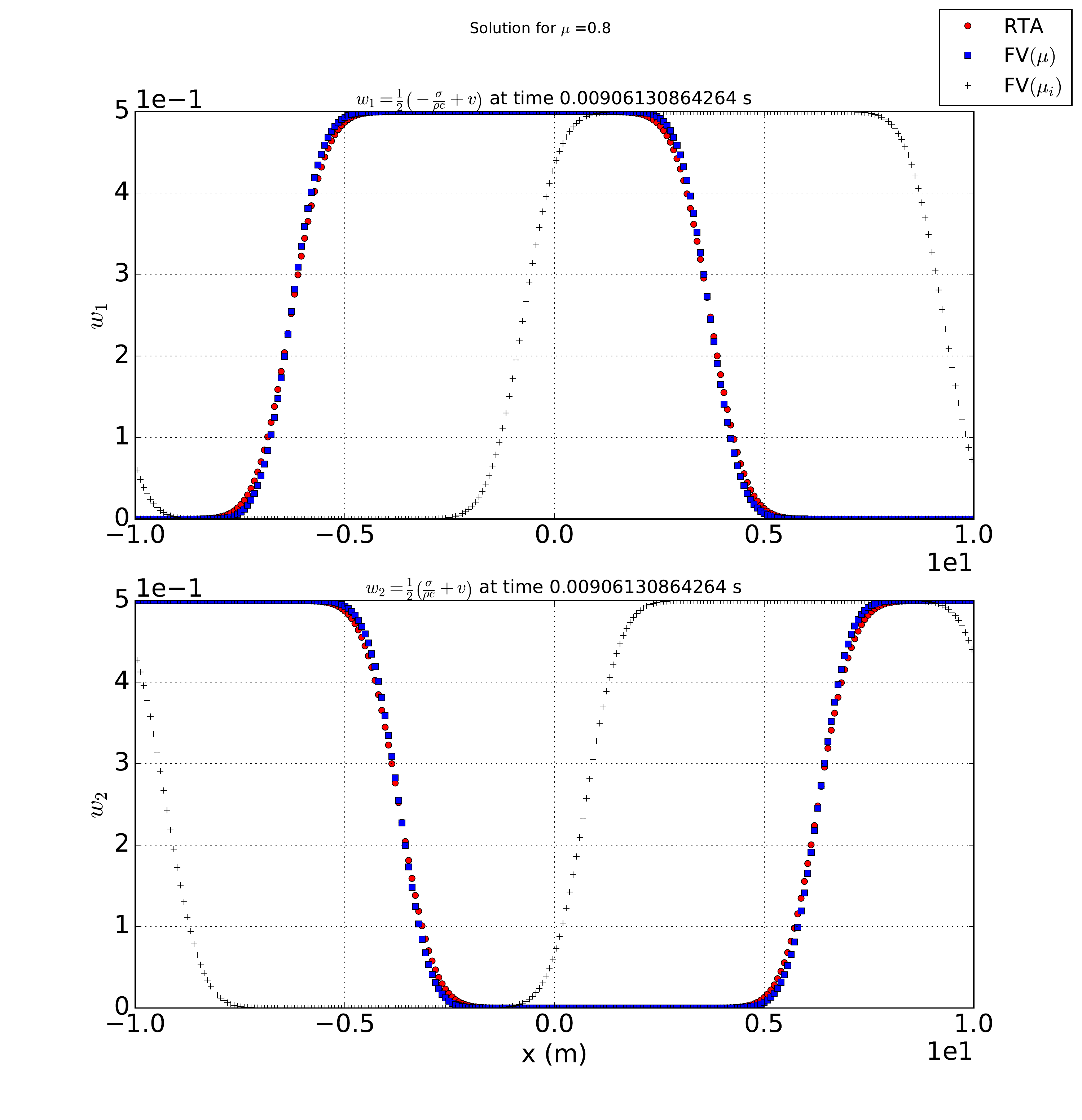} 
}
\subfigure[$t=1.29 \times 10^{-2}$ s.]{\label{w_elasto4}
\includegraphics[width=0.3\textheight]{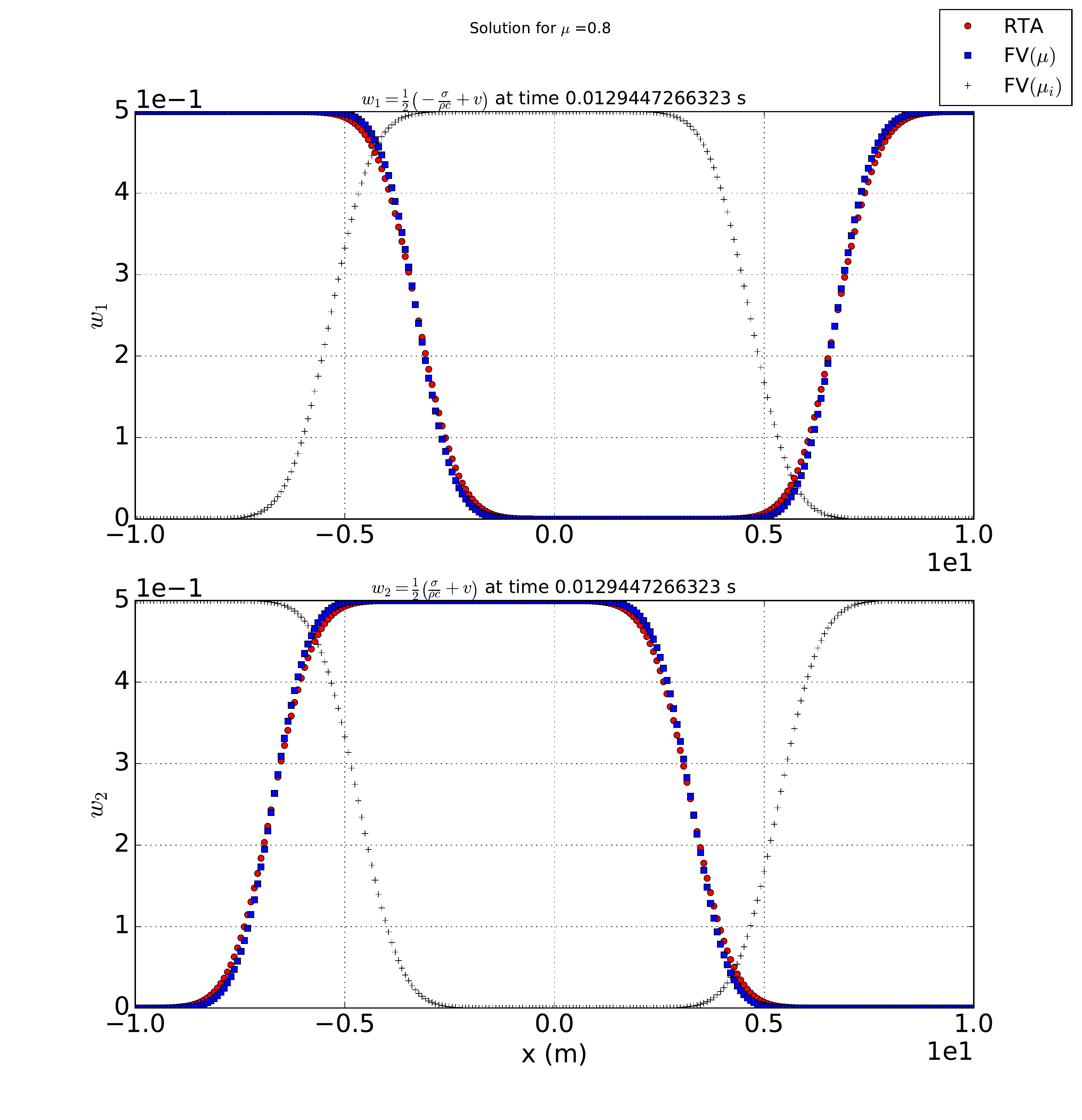} 
}
\end{center}
\caption{Comparison of the characteristic variables $w_l(\mu)$, $l=1,2$ : approximations computed with RTA, from a snapshot obtained for $\mu=0.05$, and FV for $\mu=0.8$ at different times. }
\label{w_elastoSolution}
\end{figure}

Finally, Figure \ref{elastoSolution} shows the rebuilt conserved variables obtained with RTA and FV approximations computed for the parameter value $\mu=0.8$ at different times using Equation \eqref{eq:rebuilt}. The two solutions are again in very good agreement for any times. 

\begin{figure}[H]
\begin{center}
\subfigure[$t=1.29 \times 10^{-3}$ s.]{\label{elasto1}
\includegraphics[height=0.3\textheight]{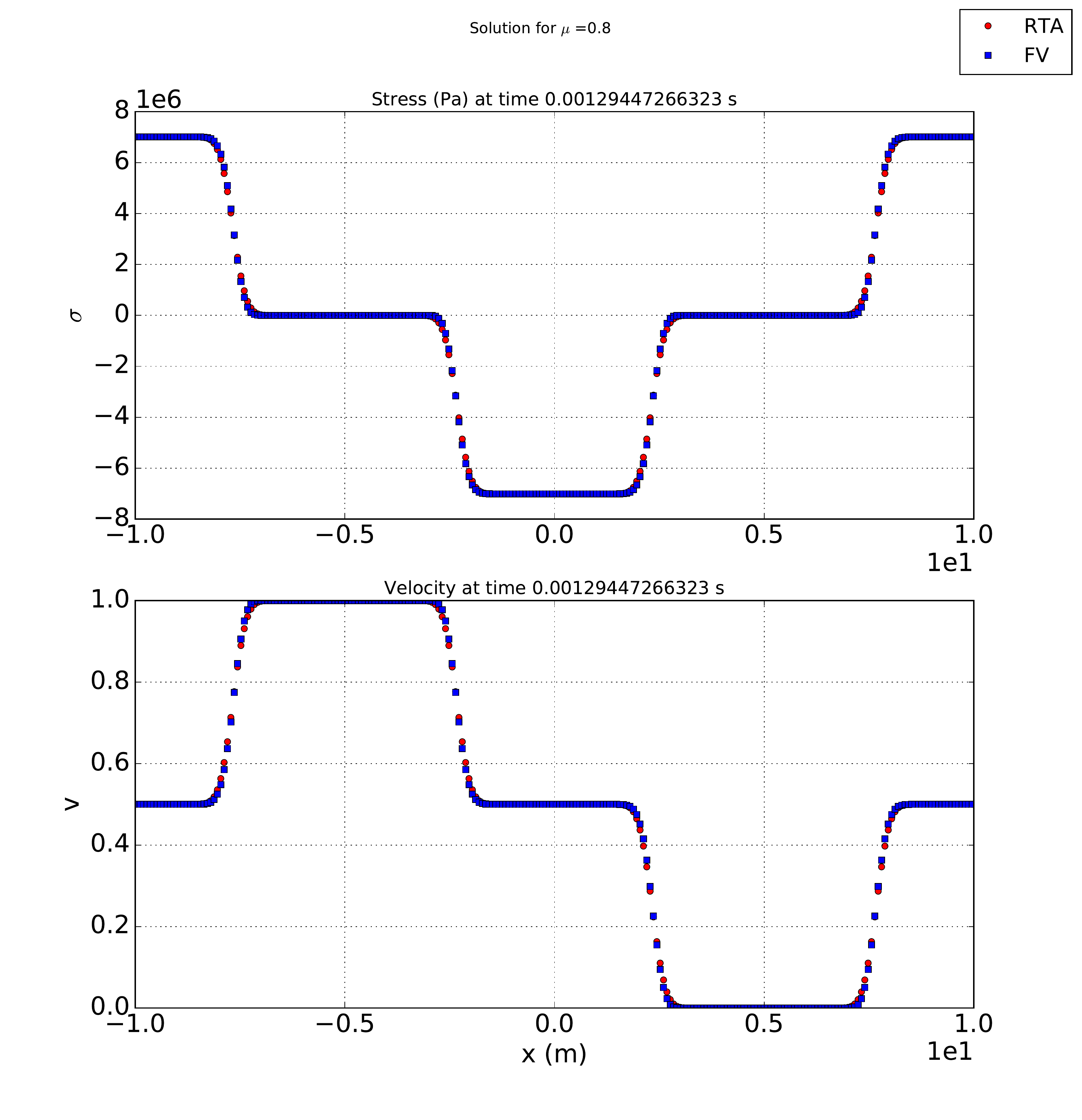} 
}
\subfigure[$t=3.88 \times 10^{-3}$ s.]{\label{elasto2}
\includegraphics[width=0.3\textheight]{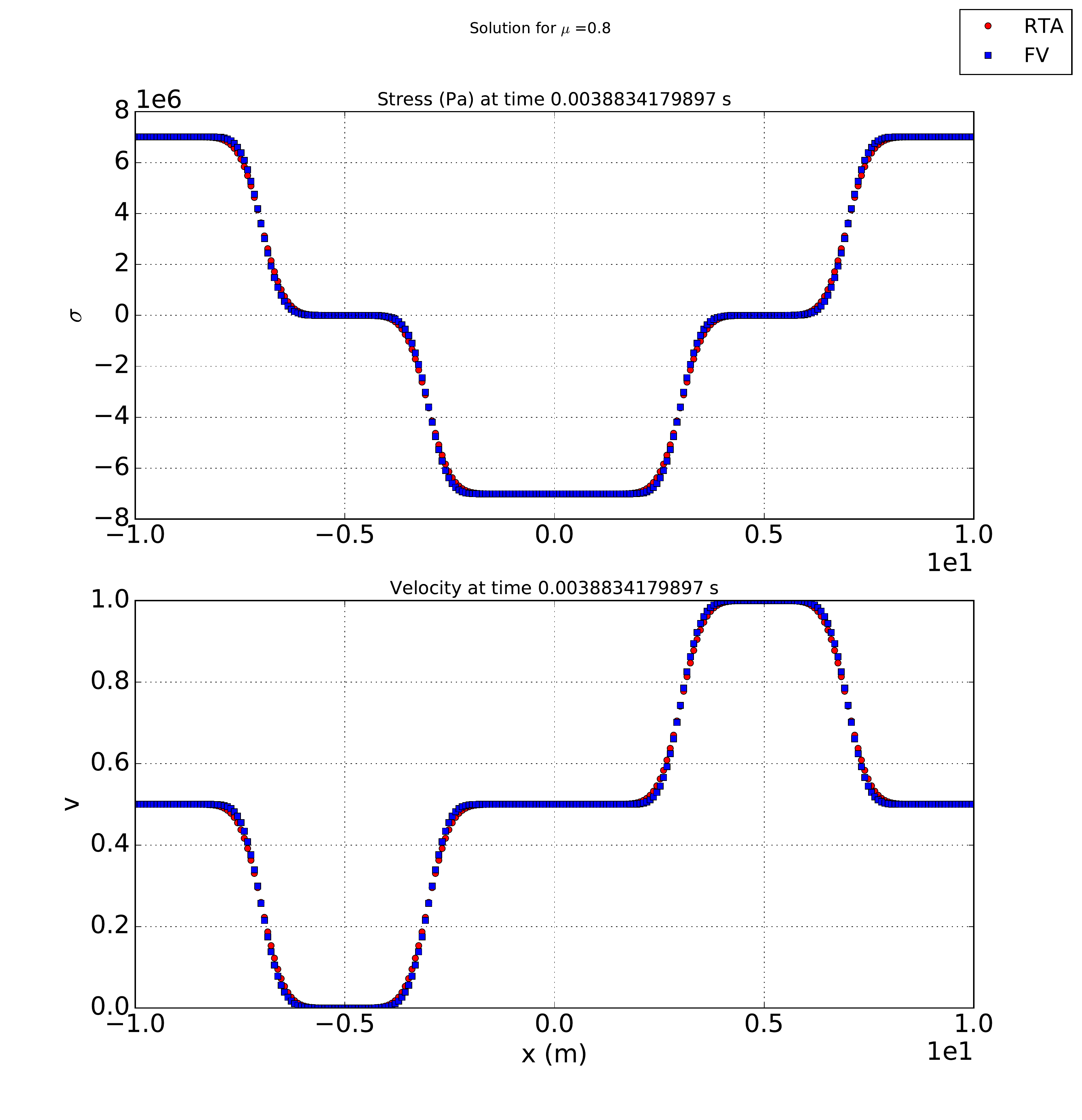} 
} \\
\subfigure[$t=9.06 \times 10^{-3}$ s.]{\label{elasto3}
\includegraphics[width=0.3\textheight]{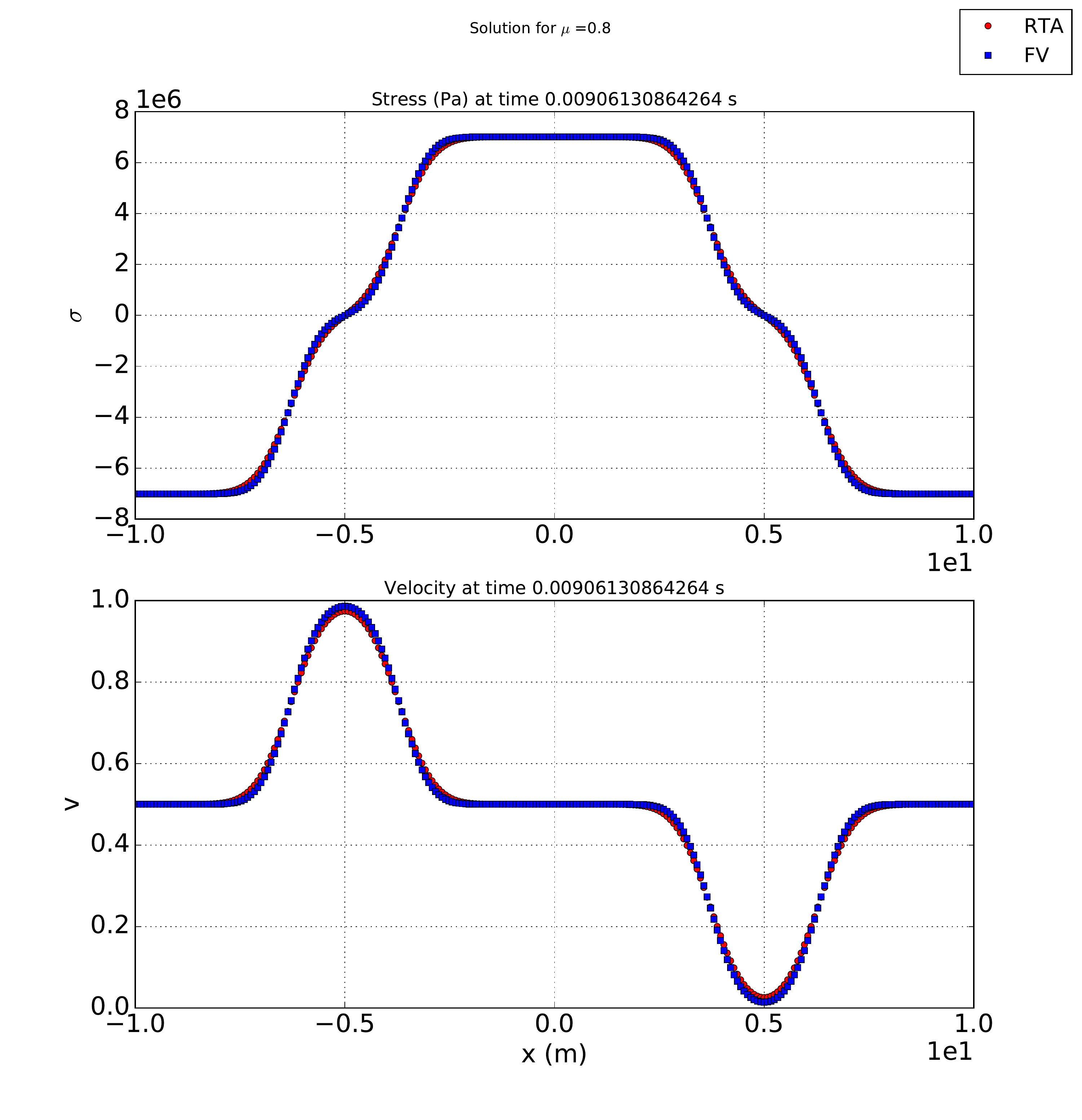} 
}
\subfigure[$t=1.29 \times 10^{-2}$ s.]{\label{elasto4}
\includegraphics[width=0.3\textheight]{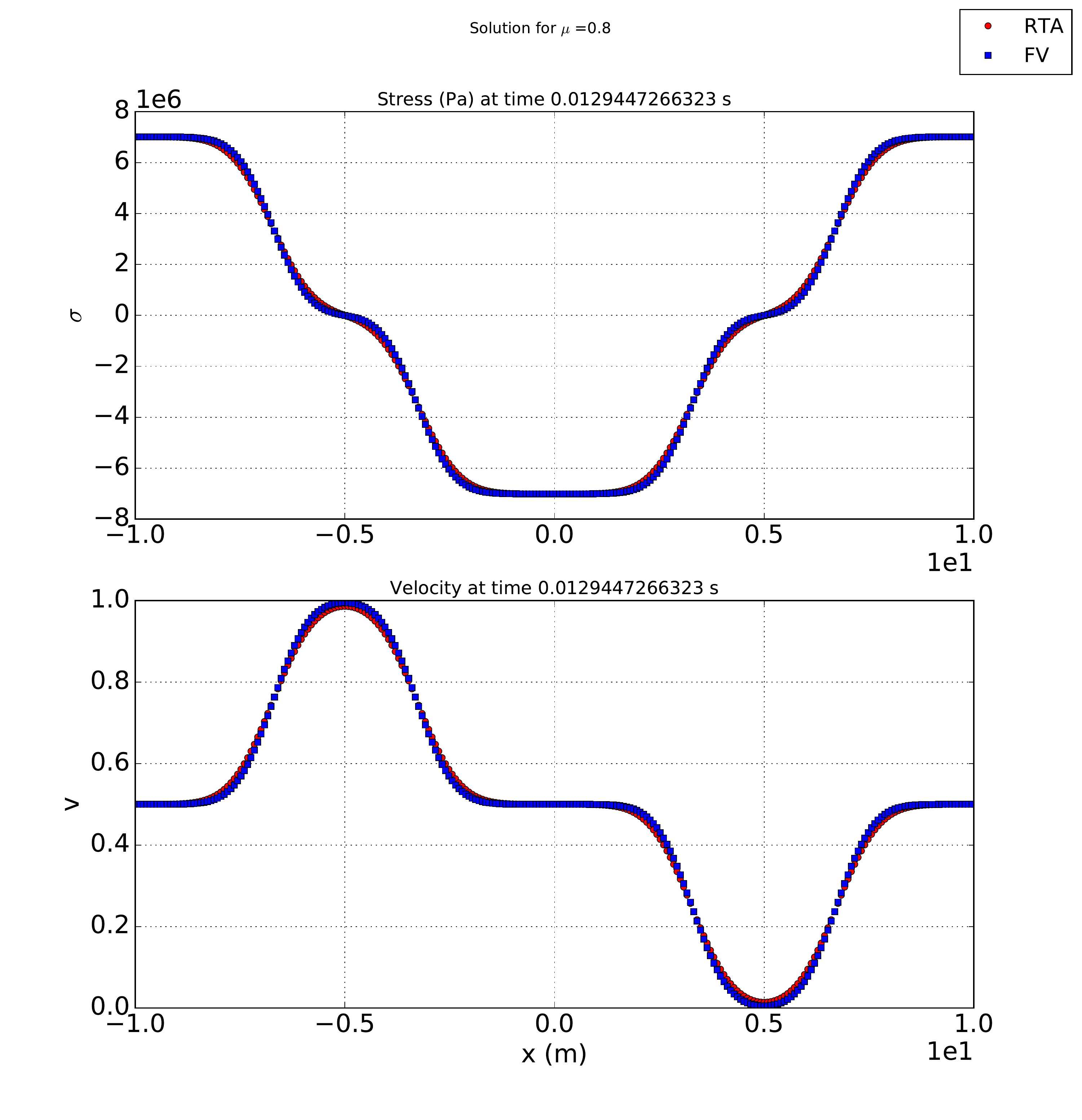} 
}
\end{center}
\caption{Comparison of stress and velocity fields : approximations computed with RTA, from a snapshot obtained for $\mu=0.05$, and FV for $\mu=0.8$ at different times. }
\label{elastoSolution}
\end{figure}

\section{Conclusion}\label{conclusion}


A numerical method for the approximation of finite volume solutions of parameter-dependent linear hyperbolic conservations laws has been proposed in this work. It allows to reconstruct an approximation of the finite volume solution for any parameter value, from only one single snapshot obtained for a given instance of the parameter with a first order upwind finite volume solver.  In the case of the linear parameter dependent transport equation, the approximation is built with the simple Reconstruct-Translate-Average algorithm that allows to reconstruction a piecewise constant function at fixed time $t^k$ from translated snapshot using the known characteristics of the equations. The provided approximation can be interpreted as a rank one low rank approximation of the finite volume solution, up to discretization error at any time. 
In practice, an offline-online implementation allows to efficiently compute the approximation for any time $t^k$ without any time stepping nor projection step. The provided approximation is proven to be total variation bounded. Moreover, a bound of the  approximation error has been derived proving that the method converges as the mesh used for the upwind scheme is refined. Finally, the error is only local in time and does not   not accumulate with time iterations. 

In this paper, the RTA algorithm has been detailed for periodic  parameter transport equation in only one dimension. Within the same framework, it can be naturally extended to  higher dimensional problems with more general boundary conditions than periodic ones.  Moreover, higher order approximations are  possible during the reconstruction step (Step 1 of Algorithm \ref{alg:RTA}).\\
Regarding to the problem of approximation, we have proposed here a strategy to compute efficiently a robust approximation of finite volume scheme from snapshots.  As discussed in Remark \ref{rk:dico}, designing computable and sharp \textit{a posteriori} error estimate could allow to design a selective procedure of snapshots lying in a dictionary, in order to improve the error of the reconstructed approximation. More generally, the proposed approach represents a first step toward  efficient non linear ROM strategies, in particular  to design dynamical RB method with adapted local basis in finite volume framework. In the lines of \cite{Cagniart2017,Rim2019}, using RTA approach together with efficient strategies to compute both transformations and reduced basis  for ROM of  finite volume solution of more general hyperbolic conservation laws will be the object of future work. 

\paragraph{Acknowledgment}
 This work has been partially funded by the CNRS Energy unit (Cellule Energie) through the project DROME.


\nocite{*}
\bibliographystyle{plain} 
\bibliography{biblioHyp}

\end{document}